\def\a{\alpha}
\def\b{\beta}
\newtheorem{theorem}{Theorem}[section]
\newtheorem{lemma}[theorem]{Lemma}
\newtheorem{corollary}[theorem]{Corollary}
\newtheorem{proposition}[theorem]{Proposition}
\theoremstyle{remark}
\newtheorem{remark}[theorem]{Remark}
\newtheorem*{remark*}{Remark}
\theoremstyle{definition}
\numberwithin{equation}{section}
\newcounter{smallromans}
\newcounter{smallalphs}
\begin{document}
\address{Department of Mathematics, Texas A\&M University, College Station, TX  77843 U.S.A}
\email{johnson@math.tamu.edu}
\address{Institute of Mathematics, Polish Academy of Sciences, \'{S}niadeckich 8, 00-956 Warszawa, Poland \mbox{and} Department of Mathematics and Statistics, Fylde College, Lancaster University, Lancaster LA1 4YF, United Kingdom \mbox{and} School of Mathematical Sciences, Western Gateway Building, University College Cork, Cork, Ireland}
\email{tomasz.marcin.kania@gmail.com}
\address{Department of Mathematics, Weizmann Institute of Science, Rehovot, Israel}
\email{gideon@weizmann.ac.il}

\title[Banach spaces of functions with countable support]{Closed ideals of operators on and complemented subspaces of Banach spaces of functions\\ with countable support}
\subjclass[2010]%
{Primary 46H10,
47B38,
47L10;
Secondary 06F30,
46B26,
47L20}

\author{William B. Johnson, Tomasz Kania, and Gideon Schechtman}

\thanks{The first-named author was supported in part by NSF DMS-1301604  and the U.S.-Israel Binational Science Foundation, the second-named author was supported by the Warsaw Centre of Mathematics and Computer Science, and the third-named author was supported in part by the U.S.-Israel Binational Science Foundation. Part of this work was done  when the third-named author was participating in the NSF Workshop in Analysis and Probability held at Texas A\&M University}

\begin{abstract}
Let $\lambda$ be an infinite cardinal number and let $\ell_\infty^c(\lambda)$ denote the subspace of $\ell_\infty(\lambda)$ consisting of all functions that assume at most countably many non-zero values. We classify all infinite dimensional complemented subspaces of $\ell_\infty^c(\lambda)$, proving that they are isomorphic to $\ell_\infty^c(\kappa)$ for some cardinal number $\kappa$. Then we show that the Banach algebra of all bounded linear operators on $\ell_\infty^c(\lambda)$ or $\ell_\infty(\lambda)$ has the unique maximal ideal consisting of operators through which  the identity operator does not factor. Using similar techniques, we obtain an alternative to Daws' approach description of the lattice of all closed ideals of $\mathscr{B}(X)$, where $X = c_0(\lambda)$ or $X=\ell_p(\lambda)$ for some $p\in [1,\infty)$, and we classify the closed ideals of $\mathscr{B}(\ell_\infty^c(\lambda))$ that contains the ideal of weakly compact operators.\end{abstract}

\maketitle

\section{Introduction and the statements of the main results}
The aim of this paper is to contribute to the two closely related programs of classifying complemented subspaces of Banach spaces and classifying the  maximal ideals of (bounded, linear) operators acting thereon. Our results constitute a natural continuation of the research undertaken by the authors in \cite{djs, kanlaus3, kanlaus2, kanlaus} concerning the maximal ideals of the Banach algebras $\mathscr{B}(X)$ for certain classical Banach spaces $X$. From this perspective, we complement results (\cite{gr, luft}) of Gramsch and Luft, who classified all closed ideals of operators acting on Hilbert spaces, and a result (\cite{daws}) of Daws, who classified all closed ideals of operators acting on $c_0(\lambda)$ and $\ell_p(\lambda)$ for $\lambda$ uncountable and $p\in [1,\infty)$. Nevertheless, bearing in mind that the similar problem of classifying all closed ideals of operators acting on $\ell_\infty(\lambda)$ is most likely intractable (as this would require, in particular, understanding \emph{all} injective Banach spaces), we offer instead a complete description of the maximal ideals for the space $\ell_\infty(\lambda)$ and its closed subspace
\[ \ell_\infty^c(\lambda) = \{z\in \ell_\infty(\lambda)\colon z(\alpha)\neq 0{\rm{}\, for\; at\; most\; countably\; many\,}\alpha<\lambda\}.\]
(For consistency of notation we sometimes denote $\ell_\infty$ by $\ell_\infty^c(\omega)$.) Our first result then reads as follows.
\begin{theorem}\label{uniquemax}Let $\lambda$ be an infinite cardinal number. Suppose that $X$ is either $\ell_\infty(\lambda)$ or $\ell_\infty^c(\lambda)$. Then $$\begin{array}{lcl}\mathscr{S}_X(X) &=& \{T\in \mathscr{B}(X)\colon T\text{ is not bounded below on any copy of }X\}\\
& = & \{T\in \mathscr{B}(X)\colon I_X\neq ATB\text{ for all }A,B\in \mathscr{B}(X)\}\end{array}$$
is the unique maximal ideal of $\mathscr{B}(X)$. \end{theorem}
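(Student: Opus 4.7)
The strategy has three parts: show that the two set-descriptions agree, verify that the resulting set is a two-sided ideal, and deduce uniqueness.

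\textbf{Equivalence of the descriptions.} The implication ``$I_X=ATB$ for some $A,B$ implies $T$ is bounded below on a copy of $X$'' is immediate: the identity $ATB=I_X$ forces $\|x\|\leq\|A\|\|T\|\|Bx\|$, so $B$ is an isomorphism onto $Y:=B(X)$, and $\|Ty\|\geq\|y\|/(\|A\|\|B\|)$ for $y\in Y$. For the converse, suppose $T$ is bounded below on an isomorphic copy $Y$ of $X$ via an isomorphism $J:X\to X$ onto $Y$. Then $TJ$ is an isomorphism onto its range $Z:=TJ(X)$, itself a copy of $X$. To exhibit the factorisation it suffices to produce a projection $P:X\to Z$; then $A:=(TJ)^{-1}P$ and $B:=J$ yield $ATB=I_X$. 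For $X=\ell_\infty(\lambda)$ such a projection is automatic, because $X$ is injective and the identity on $Z\cong X$ extends to $X$. For $X=\ell_\infty^c(\lambda)$ one invokes the complemented-subspaces classification promised in the abstract to conclude that any isomorphic copy of $\ell_\infty^c(\lambda)$ inside $\ell_\infty^c(\lambda)$ is complemented.

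\textbf{Ideal property.} Closure of the set $\{T:I_X\neq ATB\text{ for all }A,B\}$ under left and right multiplication by $\mathscr{B}(X)$ is immediate: if $I_X=A(UT)B$ then $I_X=(AU)TB$, so $T$ itself would factor. The substantive point is additivity. Assuming $T_1,T_2$ both fail to factor $I_X$ but $T_1+T_2$ does, write $I_X=A(T_1+T_2)B=S_1+S_2$ with $S_i:=AT_iB$; it suffices to show that $I_X$ factors through one of the $S_i$, contradicting that it factors through neither $T_i$. For each $\alpha<\lambda$ at least one of $\|S_1 e_\alpha\|,\|S_2 e_\alpha\|$ is $\geq\tfrac{1}{2}$, so by pigeonhole there is $i\in\{1,2\}$ and $\Lambda\subseteq\lambda$ of cardinality $\lambda$ with $\|S_i e_\alpha\|\geq\tfrac{1}{2}$ on $\Lambda$. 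The task is to refine $\Lambda$ to a subset $\Gamma$, $|\Gamma|=\lambda$, along which $(S_i e_\alpha)_{\alpha\in\Gamma}$ is essentially disjointly supported --- via a $\Delta$-system argument on the countable sets $\mathrm{supp}(S_i e_\alpha)$ when $X=\ell_\infty^c(\lambda)$ --- so that $S_i$ is bounded below on the canonical copy of $X$ indexed by $\Gamma$ with complemented image. The case $X=\ell_\infty(\lambda)$ reduces to this by cutting down to a countably-supported sub-problem and invoking injectivity.

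\textbf{Uniqueness.} Once additivity is in hand, uniqueness is formal: any proper two-sided ideal $\mathscr{J}$ of $\mathscr{B}(X)$ avoids $I_X$, so no $T\in\mathscr{J}$ can satisfy $I_X=ATB$; hence $\mathscr{J}\subseteq\mathscr{S}_X(X)$, so $\mathscr{S}_X(X)$ is the unique maximal ideal.

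\textbf{Principal obstacle.} The weight of the argument rests on the disjointification step within the proof of additivity, where a $\lambda$-sized subset $\Gamma\subseteq\Lambda$ must be extracted along which the images form a block sequence exact enough to factor $I_X$ through $S_i$. This is where the specific structure of $\ell_\infty^c(\lambda)$ --- the countable support of its vectors combined with the cardinal arithmetic of $\lambda$ --- must be exploited; the remainder of the argument is routine bookkeeping on factorisations.
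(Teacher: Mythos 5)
Your overall architecture (equate the two descriptions, prove additivity by a pigeonhole--disjointification argument, deduce uniqueness formally) matches the paper's, and the uniqueness step and the reduction $I_X=S_1+S_2$ with $S_i=AT_iB$ are fine. However, there are two genuine gaps, and they sit exactly where the real content of the theorem lives.

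First, in the equivalence of the two descriptions you assert that ``any isomorphic copy of $\ell_\infty^c(\lambda)$ inside $\ell_\infty^c(\lambda)$ is complemented,'' citing the classification of complemented subspaces. This does not follow: the classification (Theorem~\ref{maincomplemented}) describes the isomorphism types of subspaces that \emph{happen to be} complemented; it says nothing about whether a given isomorphic copy is complemented, and for uncountable $\lambda$ the space $\ell_\infty^c(\lambda)$ is not injective, so there is no automatic reason for this. (There is also a circularity: Theorem~\ref{maincomplemented} is itself derived from the complementation results you are trying to establish.) What is actually true, and what suffices, is the weaker statement of \emph{complementable homogeneity} (Theorem~\ref{complemented}): inside the copy $Z=TJ(X)$ one finds a \emph{further} subspace $F\cong X$ that is complemented in $X$, and one then restricts $T$ to $(T|_{Y})^{-1}(F)$ before building the factorisation. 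Proving Theorem~\ref{complemented} is the heart of the matter and requires Rosenthal's lemma on relatively disjoint families of measures (Lemma~\ref{Rosenthal}) together with the disjointification argument of Proposition~\ref{perspective}; it cannot be waved through.

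Second, in the additivity step, after extracting $\Gamma$ with $(S_ie_\alpha)_{\alpha\in\Gamma}$ disjointly supported and $\|S_ie_\alpha\|\geqslant\tfrac12$, you conclude that $S_i$ is bounded below ``on the canonical copy of $X$ indexed by $\Gamma$.'' Disjointness of the images only gives a lower bound on the $c_0(\Gamma)$-copy, i.e.\ on $\overline{\operatorname{span}}\{e_\alpha\colon\alpha\in\Gamma\}$; being bounded below on all of $\ell_\infty^c(\Gamma)$ is a genuinely stronger conclusion. The paper bridges this gap (Proposition~\ref{proposition3}) by extending the operator to the injective space $\ell_\infty(\lambda)$ and invoking Rosenthal's theorem (Lemma~\ref{rosl1}(i)) that an operator on $\ell_\infty(\Gamma)$ bounded below on $c_0(\Gamma)$ is bounded below on $\ell_\infty(\Gamma')$ for some $\Gamma'$ of full cardinality. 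Without this step your argument only shows that $S_i$ preserves a copy of $c_0(\lambda)$, not of $X$. (A smaller point: the $\Delta$-system lemma for families of countable sets needs set-theoretic hypotheses on $\lambda$ that may fail, e.g.\ for $\lambda$ of countable cofinality or $\lambda\leqslant 2^\omega$; the maximality-plus-counting argument of Proposition~\ref{perspective} avoids this.)
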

(We explain the required notation and terminology in the next section, while proofs are postponed to the final section.)

Actually, we are able to describe all the closed ideals of $\ell_\infty^c(\lambda)$ containing the ideal of weakly compact operators---they consist precisely of operators that do not preserve copies of $\ell_\infty^c(\kappa)$ for some $\kappa$ not exceeding the cardinal successor of $\lambda$.

\begin{theorem}\label{listofideals2}Let $\lambda$ be an infinite cardinal number. Then every closed ideal of $\mathscr{B}(\ell_\infty^c(\lambda))$ that   contains the ideal of weakly compact operators is equal to $\mathscr{S}_{\ell_\infty^c(\kappa)}(\ell_\infty^c(\lambda))$ for some infinite cardinal number $\kappa\leqslant\lambda^+$. \end{theorem}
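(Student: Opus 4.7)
The plan is to display the family $\mathscr{J}_\kappa := \mathscr{S}_{\ell_\infty^c(\kappa)}(\ell_\infty^c(\lambda))$, for infinite cardinals $\omega\leqslant\kappa\leqslant\lambda^+$, as a strictly increasing well-ordered chain of closed two-sided ideals, and then to show that every closed ideal $\mathscr{I}\subseteq\mathscr{B}(\ell_\infty^c(\lambda))$ containing the weakly compact operators coincides with one of the $\mathscr{J}_\kappa$. Monotonicity of the chain is immediate, since any copy of $\ell_\infty^c(\mu)$ contains a copy of $\ell_\infty^c(\kappa)$ whenever $\kappa\leqslant\mu$; strictness is witnessed by coordinate projections onto $\ell_\infty^c(\kappa)$-subspaces. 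Closure of each $\mathscr{J}_\kappa$ under composition on either side is a direct chase; closure under addition exploits the decomposition $\ell_\infty^c(\kappa)\simeq\ell_\infty^c(\kappa)\oplus\ell_\infty^c(\kappa)$, splitting any given isomorph of $\ell_\infty^c(\kappa)$ into two subcopies of itself in order to transfer the ``bounded below'' property from $T_1+T_2$ to one of the summands on a full subcopy. For the endpoints, the density character of $\ell_\infty^c(\lambda)$ is $\lambda$, hence $\ell_\infty^c(\lambda)$ contains no copy of $\ell_\infty^c(\lambda^+)$ and $\mathscr{J}_{\lambda^+}=\mathscr{B}(\ell_\infty^c(\lambda))$; Theorem~\ref{uniquemax} identifies $\mathscr{J}_\lambda$ as the unique maximal ideal; and $\mathscr{J}_\omega$ contains the weakly compact operators because any weakly compact operator bounded below on a copy of $\ell_\infty\simeq\ell_\infty^c(\omega)$ would carry the unit ball to a relatively weakly compact set containing a ball of the non-reflexive space $\ell_\infty$.

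Given a closed ideal $\mathscr{I}$ containing the weakly compact operators, I would set
\[\kappa_0 \;=\; \min\bigl\{\mu\leqslant\lambda^+ : \mathscr{I}\subseteq\mathscr{J}_\mu\bigr\},\]
which exists by the well-ordering of the cardinals and because $\mathscr{J}_{\lambda^+}=\mathscr{B}(\ell_\infty^c(\lambda))$. The containment $\mathscr{I}\subseteq\mathscr{J}_{\kappa_0}$ is built in. For the reverse inclusion, minimality of $\kappa_0$ supplies, for each $\mu<\kappa_0$, an operator $T_\mu\in\mathscr{I}$ that is bounded below on some copy $Y_\mu\simeq\ell_\infty^c(\mu)$ inside $\ell_\infty^c(\lambda)$. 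The complementation technique behind Theorem~\ref{uniquemax}, combined with the paper's classification of complemented subspaces of $\ell_\infty^c(\lambda)$ as precisely the spaces $\ell_\infty^c(\cdot)$, upgrades this to a genuine factorisation $I_{\ell_\infty^c(\mu)}=A_\mu T_\mu B_\mu$, so $\mathscr{I}$ contains a projection onto a subspace isomorphic to $\ell_\infty^c(\mu)$ for every $\mu<\kappa_0$.

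The remaining, and main, step is to exploit these projections to show $\mathscr{J}_{\kappa_0}\subseteq\mathscr{I}$: given $S\in\mathscr{J}_{\kappa_0}$, write $S=A\,T_\mu\,B$ for a suitable $\mu<\kappa_0$ and $A,B\in\mathscr{B}(\ell_\infty^c(\lambda))$ when $\kappa_0$ is a successor, and as a norm limit of such expressions when $\kappa_0$ is a limit, invoking closedness of $\mathscr{I}$. The main obstacle is exactly this factorisation: one must convert the qualitative hypothesis that $S$ preserves no copy of $\ell_\infty^c(\kappa_0)$ into the quantitative statement that the range of $S$ is, at least approximately, carried by complemented $\ell_\infty^c(\mu)$-subspaces with $\mu<\kappa_0$. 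My strategy would be a transfinite exhaustion argument inside $\ell_\infty^c(\lambda)$, again relying on the fact that the complemented subspaces of $\ell_\infty^c(\lambda)$ are precisely the $\ell_\infty^c(\kappa)$, with the limit case absorbed by norm-closedness of $\mathscr{I}$ and the successor case handled by an exact composition through the corresponding projection already produced in the previous step.
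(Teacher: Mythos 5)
Your architecture coincides with the paper's: the chain $\mathscr{J}_\kappa=\mathscr{S}_{\ell_\infty^c(\kappa)}(\ell_\infty^c(\lambda))$, an extremal cardinal attached to a given ideal $\mathscr{I}$, successor steps handled by factoring through an operator that fixes a copy of $\ell_\infty^c(\mu)$, and limit steps by norm approximation. But two load-bearing steps are missing, and one is stated in the wrong direction. For the base case you verify $\mathscr{W}(\ell_\infty^c(\lambda))\subseteq\mathscr{J}_\omega$, yet when $\kappa_0=\omega$ your scheme produces no operators $T_\mu$ at all, and what is actually needed is the reverse inclusion $\mathscr{J}_\omega=\mathscr{S}_{\ell_\infty}(\ell_\infty^c(\lambda))\subseteq\mathscr{W}(\ell_\infty^c(\lambda))$; without it an ideal could a priori sit strictly between $\mathscr{W}$ and $\mathscr{J}_\omega$. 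This is the nontrivial direction of Lemma~\ref{lemmaW}: a non-weakly-compact operator on a $C(K)$-space fixes a copy of $c_0$ by Pe\l czy\'{n}ski's theorem, that copy lies in some $E_\Lambda\cong\ell_\infty$ with $\Lambda$ countable, and Rosenthal's Lemma~\ref{rosl1}(i) upgrades the fixed $c_0$ to a fixed $\ell_\infty$.

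More seriously, the step you yourself call \emph{the main obstacle}---converting ``$S$ preserves no copy of $\ell_\infty^c(\kappa_0)$'' into ``$\|SR_{\lambda\setminus\Lambda}\|\leqslant\varepsilon$ for some $\Lambda$ with $|\Lambda|<\kappa_0$''---is exactly where the work lies, and ``a transfinite exhaustion argument'' does not close it without two further inputs. The exhaustion naturally produces a maximal disjointly supported family $(f_\alpha)$ with $\|Sf_\alpha\|\geqslant\varepsilon$, and one needs Rosenthal's Lemma~\ref{rosl1}(iii) to see that if this family has cardinality $\kappa_0$ then $S$ is bounded below on a disjointly supported copy of $c_0(\kappa_0)$; that conclusion contradicts the hypothesis on $S$ only after one knows the identity $\mathscr{S}_{\ell_\infty^c(\kappa)}(\ell_\infty^c(\lambda))=\mathscr{S}_{c_0(\kappa)}(\ell_\infty^c(\lambda))$ of Theorem~\ref{trueideals}, whose proof (Proposition~\ref{proposition3}) extends the operator to the injective space $\ell_\infty(\lambda)$ and again invokes Lemma~\ref{rosl1}(i). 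A priori $\mathscr{S}_{c_0(\kappa_0)}$ could be strictly smaller than $\mathscr{S}_{\ell_\infty^c(\kappa_0)}$, and an operator in the difference would defeat your exhaustion; your proposal never addresses this equivalence. The same quantitative estimate (Proposition~\ref{proposition2NEW}) is what makes the limit-case approximation (Lemma~\ref{lemmadaws}) and the additivity of each $\mathscr{J}_\kappa$ work; on that last point, merely ``splitting a copy of $\ell_\infty^c(\kappa)$ into two subcopies'' does not transfer boundedness below of $T_1+T_2$ to one summand---the correct argument is two successive refinements, extracting first a subcopy on which $\|T_1\|\leqslant\varepsilon/3$ and then a further subcopy on which $\|T_2\|\leqslant\varepsilon/3$.
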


\begin{remark*}One cannot hope that a statement similar to Theorems~\ref{listofideals} or \ref{listofideals2} holds for the lattice of closed ideals of $\mathscr{B}(\ell_\infty(\lambda))$. Indeed, in general the lattice of closed ideals of $\mathscr{B}(\ell_\infty(\lambda))$ need not be linearly ordered. Let $E_1 = L_\infty(\{0,1\}^{\omega_2})$ and $E_2=\ell_\infty(\omega_1)$. Then both $E_1$ and $E_2$ embed into $\ell_\infty(\omega_2)$, but   $E_1$ does not embed into $E_2$ and  $E_2$ does not embed into $E_1$ (\cite[Theorem 1.7]{ros2}). Consider the (not necessarily closed) ideals $\mathscr{J}_i$ consisting of operators that admit a factorisation through $E_i$ ($i=1,2$). These sets are obviously closed under compositions from left and right, and, since $E_i\cong E_i \oplus E_i$ ($i=1,2$), they are also closed under addition, so they are indeed ideals of $\mathscr{B}(\ell_\infty(\omega_2))$. We \emph{claim} that the closures of $\mathscr{J}_i$ ($i=1,2$) are incomparable. Indeed, let $P_i$ be a projection on $\ell_\infty(\omega_2)$ with range isomorphic to $E_i$ ($i=1,2$), and suppose that $P_1\in \overline{\mathscr{J}_2}$. Since $P_1^2 = P_1$, we actually have $P_1\in \mathscr{J}_2$ (see \cite[Lemma 2.7]{laschza}). This, however, is impossible because it would imply that the range of $P_1$ is isomorphic to a subspace of $E_2$. The other case is symmetric.

This observation shows even more: there are 1-injective Banach spaces $E$ of the form $L_\infty(\mu)$ for which $\mathscr{B}(E)$ has more than one maximal ideal. It is readily seen that $E=E_1\oplus E_2$ is such an example. Here $E=L_\infty(\mu)$, where $\mu$ is the direct product measure of the Haar measure on $\{0,1\}^{\omega_2}$ with the counting measure on $\omega_1$.\end{remark*}

A key result in the proof of Theorem~\ref{uniquemax} is the following theorem, which, we believe, is of interest in itself. In order to state it, we require a piece of terminology.

A Banach space $X$ is \emph{complementably homogeneous} if for each closed subspace $Y$ of $X$ such that $Y$ is isomorphic to $X$, there exists a closed, complemented subspace $Z$ of $X$ such that $Z$ is contained in $Y$ and $Z$ is isomorphic to $X$. That $\ell_1(\lambda)$ is complementably homogeneous follows directly from Lemma~\ref{rosl1}(ii). The Banach spaces $c_0(\lambda)$ have actually a stronger property: every isomorphic copy of $c_0(\lambda)$ in $c_0(\lambda)$ is complemented (\cite[Proposition 2.8]{argyrosetal}). For $p\in (1,\infty)$ complementable homogeneity of $\ell_p(\lambda)$ is also well-known and easy to prove; we shall include a proof of that fact (Proposition~\ref{lpch}) for the sake of completeness.

\begin{theorem}\label{complemented}For each infinite cardinal number $\lambda$, the Banach space $\ell_\infty^c(\lambda)$ is complementably homogeneous.\end{theorem}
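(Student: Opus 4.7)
My plan is to locate a subset $A\subseteq\lambda$ of cardinality $\lambda$ for which $Z:=T(\ell_\infty^c(A))$ is complemented in $\ell_\infty^c(\lambda)$, where $T\colon \ell_\infty^c(\lambda)\to \ell_\infty^c(\lambda)$ is an arbitrary isomorphic embedding with image $Y$; such a $Z$ automatically lies inside $Y$ and is isomorphic to $\ell_\infty^c(A)\cong \ell_\infty^c(\lambda)$. The overall strategy is a combinatorial disjointification of the supports of the vectors $Te_\alpha$ followed by a block-basis style construction of a bounded left inverse of $T|_{\ell_\infty^c(A)}$.

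First I would apply the $\Delta$-system lemma to the $\lambda$-indexed family of countable sets $\bigl(\mathrm{supp}(Te_\alpha)\bigr)_{\alpha<\lambda}$ (valid for uncountable regular $\lambda$, with standard adaptations for singular $\lambda$; the case $\lambda=\aleph_0$, where $\ell_\infty^c(\omega)=\ell_\infty$, is classical). This yields $A_0\subseteq\lambda$ of size $\lambda$ and a countable root $R\subseteq\lambda$ with $\mathrm{supp}(Te_\alpha)\cap\mathrm{supp}(Te_\beta)\subseteq R$ for distinct $\alpha,\beta\in A_0$; writing $u_\alpha:=Te_\alpha|_R$ and $v_\alpha:=Te_\alpha|_{\lambda\setminus R}$, the $v_\alpha$ have pairwise disjoint supports $F_\alpha\subseteq \lambda\setminus R$. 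Fixing $c:=\inf_{\|x\|=1}\|Tx\|>0$, I would then pass to $A_1\subseteq A_0$ of size $\lambda$ on which the $u_\alpha$ lie within $c/4$ of a common vector $u\in \ell_\infty(R)$; this is straightforward pigeonhole when $\lambda>\mathfrak{c}$ (since $\ell_\infty(R)$ has density $\mathfrak{c}$), and requires a more delicate argument exploiting the bounded-below hypothesis for smaller $\lambda$. The identity $\|Te_\alpha-Te_\beta\|=\max\bigl(\|u_\alpha-u_\beta\|,\max(\|v_\alpha\|,\|v_\beta\|)\bigr)$, which uses disjointness of the supports, combined with $\|T(e_\alpha-e_\beta)\|\geqslant c$ and $\|u_\alpha-u_\beta\|\leqslant c/2<c$, forces $\max(\|v_\alpha\|,\|v_\beta\|)\geqslant c$ for every pair of indices in $A_1$; hence at most one index can have $\|v_\alpha\|<c$, and discarding it produces $A_2\subseteq A_1$ of size $\lambda$ with $\|v_\alpha\|\geqslant c$ for every $\alpha\in A_2$.

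With the disjointly supported, uniformly bounded-below block sequence $(v_\alpha)_{\alpha\in A_2}$ in hand, I would build the projection as follows. For each $\alpha\in A_2$ pick a norm-one functional $\phi_\alpha$ on $\ell_\infty(F_\alpha)$ with $\phi_\alpha(v_\alpha)=\|v_\alpha\|$, and define $Q\colon \ell_\infty^c(\lambda)\to \ell_\infty^c(A_2)$ by $(Qz)(\alpha):=\|v_\alpha\|^{-1}\phi_\alpha(z|_{F_\alpha})$. Because the $F_\alpha$ are pairwise disjoint countable subsets of $\lambda$, any $z$ with countable support meets at most countably many $F_\alpha$, so $Qz$ has countable support and $Q$ is a bounded linear map. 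Provided the identity $Q\circ T|_{\ell_\infty^c(A_2)}=\mathrm{id}$ holds, the operator $P:=T\circ Q$ is a bounded projection of $\ell_\infty^c(\lambda)$ onto $Z:=T(\ell_\infty^c(A_2))\subseteq Y$, completing the proof.

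The main obstacle is establishing the identity $Q\circ T|_{\ell_\infty^c(A_2)}=\mathrm{id}$ on \emph{all} of $\ell_\infty^c(A_2)$. For finitely supported $x$ it is immediate from the $\Delta$-system disjointness, since $(Tx)|_{F_\alpha}=x(\alpha)\,v_\alpha$ and hence $(QTx)(\alpha)=x(\alpha)$. However, the finitely supported elements are not norm-dense in $\ell_\infty^c(A_2)$, and bounded operators on $\ell_\infty^c(\lambda)$ need not be pointwise or weak-$^*$ continuous---a rank-one perturbation of the identity built from a Banach limit already produces coordinate functionals $\delta_\gamma\circ T$ with a non-vanishing residual part that vanishes on $c_0(\lambda)$ but not on $\ell_\infty^c(\lambda)$---so the identity cannot be deduced from the finite-support case by continuity alone. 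Handling this requires an additional refinement of $A_2$ ensuring that the residual coordinate functionals annihilate $\ell_\infty^c(A_2)$ for $\gamma\in\bigcup_{\alpha\in A_2}F_\alpha$; this is the technical heart of the argument.
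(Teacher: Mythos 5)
Your skeleton---disjointify the images of the basis vectors, extract a block family on which $T$ acts essentially diagonally, and assemble a left inverse from coordinate functionals living on the disjoint blocks---is the same one the paper uses. But the step you defer at the end is not a technical refinement to be supplied later; it is the actual content of the theorem, and your proposal does not contain an idea for closing it. As you correctly observe, $Q\circ T$ agrees with the identity only on the finitely supported vectors, these are not dense in $\ell_\infty^c(A_2)$, and the coordinate functionals $\delta_\gamma\circ T$ may carry a residual part that vanishes on $c_0$ but not on $\ell_\infty^c$. The paper kills this residual part with Lemma~\ref{Rosenthal}, which rests on Rosenthal's lemma on relatively disjoint families of finitely additive measures: writing $\mu_\gamma=T^*\delta_\gamma$ and viewing these as finitely additive measures on the index set, one can pass to a further subset $\Gamma'$ of full cardinality on which $|\mu_\gamma|(\Gamma'\setminus\{\gamma\})<\varepsilon$ for every $\gamma\in\Gamma'$; the relevant composite operator is then an $\varepsilon$-perturbation of the identity on $\ell_\infty^c(\Gamma')$, hence invertible, and composing its inverse with the band projection onto the disjoint blocks yields the desired projection onto a copy of $\ell_\infty^c(\lambda)$ inside $Y$. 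Without this ingredient (or an equivalent one) the argument stops exactly where the difficulty begins.

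There is also a genuine problem earlier, in the combinatorial step. The $\Delta$-system lemma for a family of $\lambda$ \emph{countable} sets requires $\lambda$ to be regular with $\alpha^{\aleph_0}<\lambda$ for every $\alpha<\lambda$; it fails for every uncountable $\lambda\leqslant\mathfrak{c}$, in particular for $\lambda=\omega_1$ (the nested family $A_\alpha=[0,\alpha]$, $\alpha<\omega_1$, of countable subsets of $\omega_1$ admits no $\Delta$-subsystem even of size three). Your parenthetical ``valid for uncountable regular $\lambda$'' is therefore false, and the subsequent uniformisation of the root parts $u_\alpha$ is likewise only justified for $\lambda>\mathfrak{c}$. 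The paper sidesteps both issues at once via Proposition~\ref{perspective}: take a subfamily of vectors, \emph{maximal} with respect to having pairwise disjoint supports; if it had size $<\lambda$, then uncountably many of the remaining vectors would be bounded away from zero at a single common coordinate, contradicting equivalence to the $c_0(\lambda)$-basis. This produces $\lambda$ disjointly supported vectors with no root at all, for every uncountable $\lambda$, so no analogue of your $c/4$-net argument is needed.
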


We then employ Theorem~\ref{complemented} to characterise, up to isomorphism, all infinite-dimensional complemented subspaces of $\ell_\infty^c(\lambda)$, where $\lambda$ is any uncountable cardinal number.

\begin{theorem}\label{maincomplemented}Let $\lambda$ be an infinite cardinal number. Then every infinite dimensional, complemented subspace of $\ell_\infty^c(\lambda)$ is isomorphic either to $\ell_\infty$ or to $\ell_\infty^c(\kappa)$ for some cardinal $\kappa\leqslant \lambda$. Consequently, $\ell_\infty^c(\lambda)$ is a primary Banach space.
\end{theorem}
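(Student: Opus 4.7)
Let $Y$ be an infinite-dimensional complemented subspace of $\ell_\infty^c(\lambda)$, with projection $P\colon \ell_\infty^c(\lambda)\to Y$. The plan is to reduce to a support-minimal ambient space, dispose of the separable case via Lindenstrauss, and in the uncountable case apply Pe{\l}czy\'nski's decomposition method on top of Theorem~\ref{complemented}.

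\emph{Reduction.} Let $A := \bigcup_{y\in Y}\mathrm{supp}(y)\subseteq \lambda$ be the total support of $Y$, and set $\kappa := |A|$ (so $\kappa\leq\lambda$). By construction $Y\subseteq\ell_\infty^c(A)$, and no subset of $A$ of cardinality strictly less than $\kappa$ supports all of $Y$. Since the restriction map $\ell_\infty^c(\lambda)\to\ell_\infty^c(A)$ is a contractive projection, $Y$ is complemented in $\ell_\infty^c(A)\cong\ell_\infty^c(\kappa)$. If $\kappa=\omega$, then $\ell_\infty^c(A)\cong\ell_\infty$, and Lindenstrauss's theorem that every infinite-dimensional complemented subspace of $\ell_\infty$ is isomorphic to $\ell_\infty$ gives $Y\cong\ell_\infty = \ell_\infty^c(\omega)$.

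\emph{Uncountable case via Pe{\l}czy\'nski.} For $\kappa\geq\omega_1$, the identification $\ell_\infty^c(\kappa)\cong\ell_\infty^c(\kappa\times\omega)\cong(\bigoplus_n\ell_\infty^c(\kappa))_\infty$ places $\ell_\infty^c(\kappa)$ in the class handled by Pe{\l}czy\'nski's decomposition method, so $Y\cong\ell_\infty^c(\kappa)$ will follow once each of $Y$ and $\ell_\infty^c(\kappa)$ is isomorphic to a complemented subspace of the other. The first containment is already in hand. For the second, it suffices to exhibit an isomorphic copy $Z_0\cong\ell_\infty^c(\kappa)$ inside $Y$, because Theorem~\ref{complemented} applied to $Z_0$ viewed inside $\ell_\infty^c(A)\cong\ell_\infty^c(\kappa)$ then supplies a further subspace $Z\subseteq Z_0$ with $Z\cong\ell_\infty^c(\kappa)$ that is complemented in $\ell_\infty^c(\kappa)$, and restricting the corresponding projection to $Y$ shows that $Z$ is complemented in $Y$ as well.

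\emph{The embedding $\ell_\infty^c(\kappa)\hookrightarrow Y$, and the primariness corollary.} I partition $A = \bigsqcup_{\beta<\kappa}A_\beta$ into countably infinite blocks and examine the restrictions $T_\beta := P|_{\ell_\infty(A_\beta)}\colon \ell_\infty(A_\beta)\to Y$. A Rosenthal-type dichotomy for operators on $\ell_\infty$ yields, for each $\beta$, one of two alternatives: either $T_\beta$ is weakly compact and (using that $\ell_\infty$ is Grothendieck) has range supported on a countable subset $C_\beta$ of $A$; or $T_\beta$ is bounded below on a copy of $\ell_\infty$ inside $\ell_\infty(A_\beta)$, yielding a copy $W_\beta\cong\ell_\infty$ inside $Y$ with countable $A$-support $U_\beta$. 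Support-minimality of $A$ forces the second alternative for $\kappa$-many $\beta$, since otherwise $Y$ would fit inside $\ell_\infty^c$ of a set of cardinality strictly less than $\kappa$. A $\Delta$-system/transfinite disjointification on the countable $U_\beta$'s trims the surviving family to a $\kappa$-indexed subcollection with pairwise disjoint supports, and the assignment $(c_\beta)\mapsto \sum_\beta c_\beta u_\beta$ for unit vectors $u_\beta\in W_\beta$ then embeds $\ell_\infty^c(\kappa)$ isomorphically into $Y$. The primariness conclusion is immediate from the classification: a decomposition $\ell_\infty^c(\lambda)=Y_1\oplus Y_2$ forces $Y_i\cong\ell_\infty^c(\kappa_i)$ with $\kappa_i\leq\lambda$, and comparing supports gives $\max(\kappa_1,\kappa_2) = \lambda$, so one summand is isomorphic to $\ell_\infty^c(\lambda)$.

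\emph{Main obstacle.} The delicate step is the blockwise Rosenthal dichotomy combined with the disjointification. One must ensure that the lower bounds furnished by the non-weakly-compact alternative do not degenerate as $\beta$ ranges over $\kappa$, that weakly compact operators from $\ell_\infty(A_\beta)$ into $\ell_\infty^c(A)$ really have countably supported range (not merely separable range in some ambient sense), and that the transfinite pruning preserves these uniform constants. Singular $\kappa$ will require additional care so that the partial supports do not accidentally sum to cardinality $\kappa$ at a limit stage.
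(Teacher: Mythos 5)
Your skeleton (Lindenstrauss for the countable case, an embedding of $\ell_\infty^c(\kappa)$ into $Y$, Theorem~\ref{complemented} to make it complemented, then Pe\l czy\'nski's decomposition) is the paper's skeleton, but both steps carrying the technical weight have genuine gaps. First, the parameter $\kappa:=|A|$, the cardinality of the literal union of supports, is the wrong invariant, and the issue you flag as merely ``delicate'' is in fact fatal: a weakly compact operator from $\ell_\infty$ into $\ell_\infty^c(\omega_1)$ can have uncountably supported range. Indeed, composing the canonical map $\ell_\infty=C(\beta\mathbb{N})\to L_2(\nu)$ for a non-separable measure $\nu$ (pull back the product measure along a surjection $\beta\mathbb{N}\to[0,1]^{\mathfrak{c}}$) with the contractive inclusion $\ell_2(\omega_1)\subset\ell_\infty^c(\omega_1)$ yields such an operator $S$; its graph $Y=\{(x,Sx)\colon x\in\ell_\infty\}$ inside $\ell_\infty\oplus_\infty\ell_\infty^c(\omega_1\setminus\omega)=\ell_\infty^c(\omega_1)$ is a complemented subspace isomorphic to $\ell_\infty$ with uncountable total support, so your scheme would output the false conclusion $Y\cong\ell_\infty^c(\omega_1)$. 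The same example shows that the blockwise counting argument fails: a single block operator $T_\beta$, weakly compact or not, can contribute uncountably much support, so ``support-minimality of $A$'' does not force $\kappa$-many non-weakly-compact blocks. The paper sidesteps all of this by defining $\kappa$ operator-theoretically: take the least $\kappa$ such that $X$ is isomorphic to a complemented subspace of $\ell_\infty^c(\kappa)$; Proposition~\ref{proposition2NEW} then forces the projection to be bounded below on a sublattice copy of $c_0(\kappa)$, which is the correct replacement for your dichotomy.

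Second, even granting a family of disjointly supported unit vectors $u_\beta\in Y$ ($\beta<\kappa$), the map $(c_\beta)\mapsto\sum_\beta c_\beta u_\beta$ does not take values in $Y$: when $(c_\beta)\in\ell_\infty^c(\kappa)\setminus c_0(\kappa)$ the sum is not a norm-limit of finite linear combinations of the $u_\beta$, so it lies in the pointwise (lattice) closure but not in the closed linear span, and there is no reason for it to lie in the closed subspace $Y$. What your construction actually produces is a copy of $c_0(\kappa)$ inside $Y$. Upgrading that to a copy of $\ell_\infty^c(\kappa)$ inside $Y$ is exactly the content of Proposition~\ref{proposition3}, whose proof requires extending by injectivity of $\ell_\infty(\lambda)$ and invoking Rosenthal's Lemma~\ref{rosl1}(i); one applies it to the projection $P$, which is the identity on your copy of $c_0(\kappa)$. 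With $\kappa$ redefined as above and Proposition~\ref{proposition3} inserted at this point, your concluding appeal to Theorem~\ref{complemented} and Pe\l czy\'nski's method does match the paper (for primariness, replace ``comparing supports'' by the observation that $c_0(\lambda)$ does not embed into $\ell_\infty^c(\tau)$ for $\tau<\lambda$, Proposition~\ref{perspective}(ii)).
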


Let us remark that $\ell_\infty^c(\lambda)$ is a hyperplane in the space $E:=\mbox{span}\, \{\mathbf{1}_\lambda, \ell_\infty^c(\lambda)\}$. The space $E$ in turn is easily seen to be isomorphic to its hyperplanes as it contains $\ell_\infty$. More importantly, $E$ is a unital sub-C*-algebra of $\ell_\infty(\lambda)$ (the operations are pointwise), hence by the Gelfand--Naimark duality, $E\cong C(K)$ for some compact Hausdorff space $K$. The space $E$ has one more incarnation---it is isometrically isomorphic to the space $L_\infty(\mu)$, where $\mu$ is the counting measure on $\lambda$ restricted to the $\sigma$-field of sets that are countable or have countable complement. The space $L_\infty(\mu)$ is not isomorphic to a complemented subspace of a dual space. Indeed, if it were isomorphic to a complemented subspace of a dual space, then it would be complemented in its second dual, which is an injective space, so $L_\infty(\mu)$ would be injective. However, it is an observation of Pe\l czy\'{n}ski and Sudakov (\cite{pelsu}) that $\ell_\infty^c(\lambda)$ ($\lambda$ uncountable) is not complemented in $\ell_\infty(\lambda)$, so it is not injective.\medskip

It is noteworthy that there are very few kinds of $C(K)$-spaces with completely understood complemented subspaces. To the best of our knowledge, these are:
\begin{itemize}
\item $c_0(\lambda)$ for any set $\lambda$; these spaces are isomorphic to $C(\lambda^1)$ where $\lambda^1$ is the one-point compactification of the discrete set $\lambda$. Every infinite-dimensional, complemented subspace of $c_0(\lambda)$ is isomorphic to $c_0(\kappa)$ for some cardinal number $\kappa\leqslant \lambda$ (see \cite{granero} or~\cite[Proposition~2.8]{argyrosetal}).
\item $C[0,\omega^\omega]$, where $[0,\omega^\omega]$ carries the order topology. Every infinite-dimensional, complemented subspace of $C[0,\omega^\omega]$ is isomorphic either to $c_0$ or to $C[0,\omega^\omega]$ (\cite[Theorem 3]{benyamini}).

No further separable examples---apart from $c_0$ and $C[0,\omega^\omega]$---of infinite dimensional $C(K)$-spaces with completely understood complemented subspaces are known.
\item $C(\beta \mathbb{N}) \cong \ell_\infty$. Every infinite-dimensional, complemented subspace of $\ell_\infty$ is isomorphic to $\ell_\infty$; this is Lindenstrauss' theorem (\cite{lin}).
\item $C(K)$, where $K$ is the compact scattered space constructed by Koszmider under the Continuum Hypothesis (\cite{Ko}); it is the Stone space of a Boolean algebra generated by a certain almost disjoint family of subsets of $\mathbb{N}$. Every infinite-dimensional, complemented subspace of $C(K)$ is isomorphic either to $c_0$ or $C(K)$.
\item Spaces of the form $C(K\sqcup L)\cong C(K)\oplus C(L)$, where $K$ is scattered, $C(L)$ is a Grothendieck space (equivalently, every operator from $C(L)$ to a separable space is weakly compact; for definition and basic properties of Grothendieck spaces see, \emph{e.g.}, \cite[p. 150]{diestel}), and all the complemented subspaces of $C(K)$ and $C(L)$ are classified. (So far, for $K$ we can take the one-point compactification of a discrete set, $[0,\omega^\omega]$, or the above-mentioned space constructed by Koszmider; and for $L $ we can take $ \beta \mathbb{N}$, so that  $C(L)$ is isometrically  isomorphic to $\ell_\infty$, a prototypical example of a Grothendieck space; see \cite{groth}.) Each complemented subspace of $C(K)\oplus C(L)$ is isomorphic to a space of the form $X\oplus Y$, where $X$ is a complemented subspace of $C(K)$ and $Y$ is complemented in $C(L)$.

Indeed, if $K$ is scattered, then $C(K)^*\cong \ell_1(|K|)$, so it follows from \cite[Corollary 2]{hj} that the unit ball of $C(K)^*$ is weak*-sequentially compact. Applying \cite[Corollary 5 on p.~150]{diestel}, we infer that every operator $T\colon C(L)\to C(K)$ is weakly compact and hence strictly singular by \cite{pelczynski}. It follows then from \cite[Theorem 1.1]{wojtaszczyk} that every complemented subspace of $C(K)\oplus C(L)$ is of the desired form.
\end{itemize}

We therefore extend the above list by adding the following classes of examples:
\begin{itemize}
\item $\ell_\infty^c(\lambda)$, for any uncountable cardinal number $\lambda$,
\item $\ell_\infty^c(\lambda)\oplus C(K)$, where $C(K)$ is isomorphic to one of the spaces $c_0(\kappa)$, $C[0,\omega^\omega]$ or the above-mentioned space constructed by Koszmider. Indeed, for any cardinal number $\lambda$, $\ell_\infty^c(\lambda)$ is a Grothendieck space (see Proposition~\ref{grothendieck} below), so we can use the antepenultimate clause to deduce the claim.\end{itemize}

As a by-product of our investigations, we give an alternative approach  to Daws' description of all closed ideals of $\mathscr{B}(c_0(\lambda))$ and $\mathscr{B}(\ell_p(\lambda))$ for $\lambda$ uncountable and $p\in [1,\infty)$.
\begin{theorem}\label{listofideals}Let $\lambda$ be an infinite cardinal number and let $p\in [1,\infty)$. Then every non-zero, proper closed ideal of $\mathscr{B}(c_0(\lambda))$ and $\mathscr{B}(\ell_p(\lambda))$ is of the form $\mathscr{S}_{c_0(\kappa)}(c_0(\lambda))$ and $\mathscr{S}_{\ell_p(\kappa)}(\ell_p(\lambda))$, respectively, for some infinite cardinal number $\kappa\leqslant\lambda$. \end{theorem}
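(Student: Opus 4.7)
The plan is to associate to every non-zero proper closed ideal $\mathscr{J}$ of $\mathscr{B}(X)$---where $X$ stands uniformly for $c_0(\lambda)$ or $\ell_p(\lambda)$ and, correspondingly, $X_\mu$ stands for $c_0(\mu)$ or $\ell_p(\mu)$---the cardinal invariant
\[
\kappa(\mathscr{J})=\min\{\mu\leqslant\lambda:\text{no }T\in\mathscr{J}\text{ is bounded below on any copy of }X_\mu\},
\]
and to show $\mathscr{J}=\mathscr{S}_{X_{\kappa(\mathscr{J})}}(X)$. As preparation I would verify that each $\mathscr{S}_{X_\mu}(X)$ is itself a closed two-sided ideal of $\mathscr{B}(X)$; the only non-routine point is additive closure, for which, given $S,T\in\mathscr{S}_{X_\mu}(X)$ and a putative $Y\cong X_\mu$ on which $S+T$ is bounded below, one performs a two-stage transfinite Bessaga--Pe\l czy\'nski-type extraction along the $1$-unconditional basis of $Y$ of length $\mu$ to build a further $X_\mu$-subspace $Z\subseteq Y$ on which both $\|S|_Z\|$ and $\|T|_Z\|$ are arbitrarily small---contradicting the lower bound on $(S+T)|_Z$.

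That $\kappa:=\kappa(\mathscr{J})$ is well defined and at most $\lambda$ follows from properness: if some $T\in\mathscr{J}$ were bounded below on a copy of $X=X_\lambda$, then complementable homogeneity (Proposition~\ref{lpch} for $\ell_p$, and \cite[Proposition~2.8]{argyrosetal} for $c_0$), used as in the proof of Theorem~\ref{uniquemax}, would yield a factorisation $I_X=ATB$ and hence $I_X\in\mathscr{J}$. The inclusion $\mathscr{J}\subseteq\mathscr{S}_{X_\kappa}(X)$ is immediate from the definition.

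The reverse inclusion is the core. The base case $\kappa=\aleph_0$ is standard: $\mathscr{S}_{X_{\aleph_0}}(X)=\mathscr{K}(X)$ by a gliding-hump argument, and every non-zero closed ideal of $\mathscr{B}(X)$ contains the compact operators. For $\kappa>\aleph_0$, the defining minimality of $\kappa$ furnishes, for each infinite $\mu<\kappa$, an operator $S_\mu\in\mathscr{J}$ preserving some $Y\cong X_\mu$; a block-subspace refinement inside $Y$ (another $\Delta$-system/Bessaga--Pe\l czy\'nski extraction) arranges $Y$ and $S_\mu(Y)$ to be coordinate subspaces of $X$, and a short calculation then shows that the canonical coordinate projection $P_\mu$ onto $Y$ equals $AS_\mu B$ for some $A,B\in\mathscr{B}(X)$; hence $P_\mu\in\mathscr{J}$, and any operator factoring through $X_\mu$ lies in $\mathscr{J}$. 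Consequently it suffices to show that every $T\in\mathscr{S}_{X_\kappa}(X)$ is approximable in norm by operators $TP_A$ with $P_A$ a canonical coordinate projection onto $X_A\subseteq X$ and $|A|<\kappa$.

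Producing, for a given $T\in\mathscr{S}_{X_\kappa}(X)$ and $\varepsilon>0$, such a set $A$ is the principal obstacle. The strategy is a $\Delta$-system / cardinal pigeon-hole on the (automatically countable) supports of the columns $\{Te_\alpha:\alpha<\lambda\}$: if no $A$ of size $<\kappa$ satisfied $\|T(I-P_A)\|<\varepsilon$, a transfinite recursion of length $\kappa$ produces unit vectors $\{x_\xi\}_{\xi<\kappa}$ in $X$ with pairwise disjoint supports, for which the images $\{Tx_\xi\}_{\xi<\kappa}$---after subtracting a fixed countable root portion, arranged to be small in norm---are seminormalised and pairwise disjointly supported. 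They then span, up to small perturbation, a coordinate-block copy of $X_\kappa$ on which $T$ is bounded below, contradicting $T\in\mathscr{S}_{X_\kappa}(X)$. The genuinely delicate point is when $\kappa$ is singular: the recursion must be stratified through regular cofinal stages and the $\Delta$-system lemma applied cofinally inside $\kappa$ to accumulate enough indices, but the $1$-unconditional coordinate basis of length $\lambda$ and the countable-support property of individual vectors of $X$ carry the construction through uniformly for $c_0(\lambda)$ and $\ell_p(\lambda)$ with $p\in[1,\infty)$.
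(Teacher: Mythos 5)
Your overall architecture---each $\mathscr{S}_{X_\mu}(X)$ is a closed ideal, an operator outside $\mathscr{S}_{X_\mu}(X)$ generates every operator factoring through $X_\mu$, and every $T\in\mathscr{S}_{X_\kappa}(X)$ is a norm-limit of operators factoring through $X_\mu$ with $\mu<\kappa$---is the same as the paper's (Proposition~\ref{singular}, Remark~\ref{lpstronger} combined with the argument of Lemma~\ref{lemmaideal}, and Lemma~\ref{lemmadaws}), and your single invariant $\kappa(\mathscr{J})$ pleasantly streamlines the paper's sup-attained/not-attained case split. However, your central approximation claim---that every $T\in\mathscr{S}_{X_\kappa}(X)$ is approximable in norm by $TP_A$ with $|A|<\kappa$---is \emph{false} for $X=\ell_1(\lambda)$. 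Let $\varphi\in\ell_1(\lambda)^*$ be the summing functional $\varphi(x)=\sum_{\alpha<\lambda}x(\alpha)$ and set $Tx=\varphi(x)e_0$. This $T$ is rank one, hence lies in $\mathscr{S}_{\ell_1(\kappa)}(\ell_1(\lambda))$ for every infinite $\kappa$, yet for any $A\subsetneq\lambda$ and any $\beta\notin A$ we have $(T-TP_A)e_\beta=e_0$, so $\|T-TP_A\|\geqslant 1$. The obstruction is that the unit vector basis of $\ell_1$ is not weakly null: failure of $T$ to be bounded below on ``far-out'' copies of $\ell_1(\kappa)$ does not force $T$ to be small on far-out coordinates. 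The paper circumvents this by approximating on the \emph{range} side, $\|T-P_BT\|<\varepsilon$ with $|B|<\kappa$ (first bullet of the proof of Lemma~\ref{lemmadaws}), and the disjointification needed there is Rosenthal's machinery on relatively disjoint families of measures (\cite[Propositions 3.1--3.3 and Theorem 3.3]{ros1}, reflected in Lemma~\ref{rosl1}(ii)); this is a genuinely different argument from the $c_0$ and $\ell_p$ ($p>1$) cases, not a routine variant of your extraction. So the $p=1$ case of your proof has a real hole.

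A secondary concern is your appeal to the $\Delta$-system lemma for $\kappa$ many countable supports. In ZFC that lemma requires $\mu^{\aleph_0}<\kappa$ for all $\mu<\kappa$ together with regularity of $\kappa$; already for $\kappa=\omega_1$ it can fail without the Continuum Hypothesis. Fortunately it is not needed: the right tool is the maximality-plus-pigeonhole argument of Propositions~\ref{perspective} and~\ref{proposition2NEW}. Take a maximal disjointly supported family $(f_\alpha)$ of unit vectors with $\|Tf_\alpha\|\geqslant\varepsilon$; if it had cardinality $\kappa$, then Lemma~\ref{rosl1}(iii) (for $c_0$) or Corollary~\ref{lemma21lp} (for $1<p<\infty$) would make $T$ bounded below on a copy of $X_\kappa$; otherwise the union $A$ of the supports has cardinality $<\kappa$ and maximality yields $\|T(I-P_A)\|\leqslant\varepsilon$ directly. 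This works uniformly for singular $\kappa$, with no stratification through cofinal stages. With that substitution, and with the Rosenthal-style range-side argument supplied for $p=1$, your proof goes through and essentially coincides with the paper's.
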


We thank the referee for an unusually detailed and helpful report.

\section{Preliminaries}In this paper, we consider Banach spaces over the fixed scalar field either of real or of complex numbers. Our terminology is standard and follows mainly \cite{ak} and \cite{lt}. By  \emph{operator}, we mean a bounded, linear operator acting between Banach spaces. Let $X$ and $Y$ be Banach spaces. We denote by $\mathscr{B}(X,Y)$ the Banach space of all operators from $X$ to $Y$ and write $\mathscr{B}(X)$ for $\mathscr{B}(X,X)$. An operator $P\in \mathscr{B}(X)$ is a \emph{projection} if $P^2 = P$.

Let $T\colon X\to Y$ be an operator. We say that $T$ is \emph{bounded below} if there is a constant $c>0$ such that $\|Tx\|\geqslant c\|x\|$ for all $x\in X$. It is clear an operator is bounded below if and only if it is injective and has closed range, in which case it is an isomorphism onto its range.

Let $E$ be a Banach space. We say that on operator $T$ acting between Banach spaces is \emph{bounded below on a copy of }$E$ if there exists a subspace $E_0$ of the domain of $T$ such that $E_0$ is isomorphic to $E$ and the operator $T|_{E_0}$ is bounded below. For each pair of Banach spaces $X,Y$ we set
\[ \mathscr{S}_E(X,Y) = \{ T\in \mathscr{B}(X,Y)\colon T\text{ is not bounded below on any copy of }E\},\]
and write $\mathscr{S}_E(X) = \mathscr{S}_E(X,X)$. We call operators which belong to $\mathscr{S}_E(X,Y)$ $E$-\emph{singular operators}. We denote by $\mathscr{S}_E$ the class of all $E$-singular operators acting between arbitrary Banach spaces. In general, the set $\mathscr{S}_E(X)$ need not be closed under addition. This is readily seen in the case where $E=X = \ell_p\oplus \ell_q$ for $1\leqslant p<q<\infty$.

We use von Neumann's definition of an ordinal number. We identify cardinal numbers with initial ordinal numbers. For instance, $\omega_1$ is the first uncountable ordinal, $\omega_2$ is the second one and so on. If $\lambda$ is a cardinal and $\alpha<\lambda$ we often write $[0,\alpha)$ and $(\alpha, \lambda)$ for the set of ordinals less than $\lambda$ which are, respectively, less than $\alpha$ and greater than $\alpha$. Even though $\alpha$ and $[0,\alpha)$ are equal, we distinguish these symbols to avoid confusion in the situation when we think of $\alpha$ as an element of $\lambda$ and in the situation when it is considered a subset of it. The \emph{cofinality} of a cardinal number $\lambda$ is the minimal cardinality $\kappa$ of a family of cardinals strictly less than $\lambda$, say $\{\lambda_\alpha\colon \alpha<\kappa\}$, such that $\lambda = \sum_{\alpha<\kappa} \lambda_\alpha$. The cofinality of $\omega_1$ is $\omega_1$ while the cofinality of $\omega_{\omega}$ is countable. If $\Lambda$ is a set, we denote by $|\Lambda|$ the cardinality of $\Lambda$. For a cardinal number $\lambda$, we denote by $\lambda^+$ its cardinal successor; for instance $\omega_1^+=\omega_2$ etc.

For a subset $Y$ of a topological space $X$ we define the \emph{density} of $Y$, $\mbox{dens}\, Y$,  to be the minimal cardinality of a dense subset of $Y$. Let $X$ be a Banach space and let $Y\subseteq X$ be a closed subspace. 

Let $\lambda$ be an infinite cardinal and let $E_\lambda$ denote one of the spaces: $c_0(\lambda)$, $\ell_p(\lambda)$ ($p\in [1,\infty]$) or $\ell_\infty^c(\lambda)$. For each subset $\Lambda\subseteq \lambda$ the following formula defines a contractive projection $P_\Lambda$ on $E_\lambda$:

\begin{equation}\label{projdef}(P_\Lambda f)(\alpha) = \left\{\begin{array}{ll}f(\alpha), &\text{if }\alpha\in \Lambda\\ 0, &\text{if }\alpha\notin \Lambda\end{array} \right.\qquad (f\in E_\lambda).\end{equation}
We then denote the range of $P_\Lambda$ by $E_\Lambda$. Of course, the range of $P_\Lambda$ is isometrically isomorphic to $E_{|\Lambda|}$. We denote by $R_\Lambda\colon E_\lambda\to E_{\Lambda}$ the \emph{restriction operator}, which identifies an element $f$ in the range of $P_\Lambda$ with the restriction of $f$, $R_\Lambda f$, to the set $\Lambda$. For $f\in E_\lambda$ we define the \emph{support} of $f$ by $\mbox{supp}\, f = \{\alpha<\lambda\colon f(\alpha)\neq 0\}$. A function $f$ is \emph{supported} in a set $\Lambda$ if $\mbox{supp}\, f\subseteq \Lambda$.

We shall also require the following lemma due to Rosenthal (\cite[Proposition 1.2, Corollary on p.~29 and Remark 1 on p.~30]{ros1}).

\begin{lemma}\label{rosl1}Let $\lambda$ be an infinite cardinal number and let $X$ be a Banach space.
\begin{enumerate}
\item[(i)] Suppose that $T\colon \ell_\infty(\lambda)\to X$ is an operator such that $T|_{c_0(\lambda)}$ is bounded below. Then there exists a set $\Lambda\subset \lambda$ of cardinality $\lambda$ such that $T|_{\ell_\infty(\Lambda)}$ is bounded below. Consequently, if $Z$ is an injective Banach space and $T\colon Z\to X$ is an operator that is bounded below on a copy of $c_0(\lambda)$, then $T$ is bounded below on a copy of $\ell_\infty(\lambda)$.
\item[(ii)] Suppose that $A\subseteq X$ is a closed subspace and $T\colon X\to \ell_1(\lambda)$ is an operator such that for some $
\delta> 0$ and some bounded set $\{y_\alpha\colon \alpha<\lambda\}\subset A$ we have
\[ \delta\leqslant \|Ty_\alpha - Ty_\beta\|\qquad (\alpha,\beta < \lambda, \alpha\neq \beta).\]
Then $T$ is bounded below on some subspace of $A$ that is isomorphic to $\ell_1
(\lambda)$ and complemented in $X$.
\item[(iii)] Suppose that $T\colon c_0(\lambda)\to X$ is an operator such that $$\inf\{\|Te_\alpha\|\colon \alpha<\lambda\}>0,$$ where $\{e_\alpha\colon \alpha<\lambda\}$ is the canonical basis of $c_0(\lambda)$. Then there exists a set $\Lambda\subset \lambda$ of cardinality $\lambda$ such that $T|_{c_0(\Lambda)}$ is bounded below.
\end{enumerate}
\end{lemma}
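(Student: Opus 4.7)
My plan for this lemma is a three-pronged transfinite extraction, with each part tailored to the specific sequence space on the target side. All three arguments share a template: assume toward contradiction that the desired $\lambda$-sized subset does not exist, build by transfinite recursion a witness family of size $\lambda$ whose behaviour is incompatible with the structure of the target space, and extract a contradiction.

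For part~(i), I would promote the $c_0(\lambda)$ lower bound to an $\ell_\infty(\Lambda)$ lower bound via a diagonalisation argument. Assume toward contradiction that for each $\Lambda\subseteq\lambda$ with $|\Lambda|=\lambda$, $T|_{\ell_\infty(\Lambda)}$ is not bounded below; then for every such $\Lambda$ and every $\varepsilon>0$ there is a unit $f\in \ell_\infty(\Lambda)$ with $\|Tf\|<\varepsilon$. Iterating transfinitely and thinning at each stage, construct disjointly-supported unit vectors $(f_\xi)_{\xi<\lambda}$ in $\ell_\infty(\lambda)$ with $\|Tf_\xi\|\to 0$. Each $f_\xi$ has a coordinate $\gamma_\xi$ with $|f_\xi(\gamma_\xi)|\geq 1/2$; compare $f_\xi$ to $\tfrac12 e_{\gamma_\xi}$ under $T$ to derive that $\|Te_{\gamma_\xi}\|\to 0$ along a cofinal subset, contradicting the $c_0(\lambda)$ lower bound. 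The consequence for injective $Z$ then follows: an embedding of $c_0(\lambda)$ into an injective $Z$ extends to an embedding of $\ell_\infty(\lambda)$ by injectivity, reducing the general statement to the main assertion.

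For part~(ii), my plan is a ``concentration on distinct coordinates'' extraction inside $\ell_1(\lambda)$. Each $Ty_\alpha$ has countable support, so using the assumption $\|Ty_\alpha-Ty_\beta\|\geq\delta$ one recursively picks $\alpha_\xi$ together with a coordinate $\gamma_\xi$ carrying a definite mass in $Ty_{\alpha_\xi}$ while $Ty_{\alpha_\xi}$ has only a small mass at each previously chosen $\gamma_\eta$; the transfinite construction succeeds because at each stage the previously chosen coordinates form a set of cardinality less than $\lambda$, while the family $\{Ty_\alpha\}$ has cardinality $\lambda$. The differences $y_{\alpha_\xi}-y_{\alpha_0}$ for $\xi>0$ are then mapped under $T$ to almost disjointly supported vectors in $\ell_1(\lambda)$ of $\ell_1$-norm bounded below, and therefore span a copy of $\ell_1(\lambda)$ in $A$ on which $T$ is bounded below. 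Complementation of this copy in $X$ then comes from the pulled-back coordinate functionals $e_{\gamma_\xi}^*\circ T\in X^*$, which give an explicit bounded projection.

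For part~(iii), my plan is dual in spirit to part~(ii): pick norming functionals $\phi_\alpha\in X^*$ with $\phi_\alpha(Te_\alpha)\geq c/2$ where $c=\inf_\alpha\|Te_\alpha\|>0$, and apply a $\Delta$-system or disjointification extraction to the countable pseudo-supports of $T^*\phi_\alpha\in \ell_1(\lambda)$. This produces a $\Lambda$ of cardinality $\lambda$ on which $(Te_\alpha)_{\alpha\in\Lambda}$ is almost biorthogonal, whence by standard summing arguments $T|_{c_0(\Lambda)}$ is bounded below by a constant depending only on $c$. The main obstacle across all three parts will be carrying out the transfinite extractions when $\lambda$ is singular or fails strong Ramsey-theoretic properties; the decisive point (as in Rosenthal's original treatment) is that one only needs a $\lambda$-sized---not a stationary---subset, and the failure of the conclusion on every such subset furnishes enough incompatibility to drive the diagonal contradiction.
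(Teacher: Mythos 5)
First, note that the paper does not actually prove this lemma: it is quoted directly from Rosenthal \cite[Proposition 1.2, Corollary on p.~29 and Remark 1 on p.~30]{ros1}, so there is no in-paper argument to compare against, and the real question is whether your sketch would reconstruct Rosenthal's results. It would not, because part (i) contains a genuine non-sequitur. From $\|Tf_\xi\|\to 0$ and $|f_\xi(\gamma_\xi)|\geqslant 1/2$ you cannot conclude that $\|Te_{\gamma_\xi}\|$ is small: writing $f_\xi=f_\xi(\gamma_\xi)e_{\gamma_\xi}+g_\xi$, the vector $Te_{\gamma_\xi}$ differs from $f_\xi(\gamma_\xi)^{-1}Tf_\xi$ by $T$ applied to the (typically large) remainder $g_\xi$, about which you know nothing. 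Indeed the hypothesis already forces $\|Te_\gamma\|\geqslant c>0$ for every $\gamma$, and the entire difficulty of (i) is that a unit vector of $\ell_\infty(\Lambda)$ is far from $c_0(\Lambda)$. The transfinite construction is also unjustified: a unit vector $f\in\ell_\infty(\Lambda)$ with $\|Tf\|<\varepsilon$ may have support equal to all of $\Lambda$, so after one step there need be nothing disjoint left to recurse on. The correct proof works on the dual side: choose norming functionals $x^*_\gamma$ for $Te_\gamma$, regard $\mu_\gamma=T^*x^*_\gamma$ as finitely additive measures on the subsets of $\lambda$ with $|\mu_\gamma(\{\gamma\})|\geqslant c$, and apply Rosenthal's Lemma 1.1 on relatively disjoint families of measures (the same result the paper invokes in Lemma~\ref{Rosenthal}) to obtain $\Lambda$ with $|\Lambda|=\lambda$ and $|\mu_\gamma|(\Lambda\setminus\{\gamma\})<c/2$ for all $\gamma\in\Lambda$; evaluating $x^*_\gamma(Tf)$ at a coordinate where $|f(\gamma)|$ is nearly $1$ then gives the lower bound on $\ell_\infty(\Lambda)$.

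Parts (ii) and (iii) are closer in spirit, but in both cases the ``extraction'' you appeal to is precisely the nontrivial content. In (ii), a single coordinate $\gamma_\xi$ carrying a definite mass of $Ty_{\alpha_\xi}$ need not exist: an $\ell_1$-vector of norm $\delta$ can spread its mass over countably many coordinates, each receiving arbitrarily little. One must instead extract disjoint finite sets $E_\xi$ on which the relevant differences retain a fixed fraction of their norm, which is what \cite[Propositions 3.1 and 3.2]{ros1} provide; granted that, your lower $\ell_1$-estimate and the projection built from the pulled-back coordinate functionals are correct. In (iii), a $\Delta$-system argument on the countable supports of $T^*\phi_\alpha$ fails already for $\lambda=\omega_1$ (a family of $\aleph_1$ countable sets need not contain an uncountable $\Delta$-subsystem), and the disjointification needed is again Rosenthal's Lemma 1.1, whose proof in the singular-cardinal case is delicate enough that it required a published correction \cite{ros1c}. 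In short, (ii) and (iii) are salvageable once the correct extraction lemma is identified and cited, but (i) as written would not survive being made precise.
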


We say that a Banach space is \emph{primary} if whenever $X$ is isomorphic to $Y\oplus Z$, then at least one of the spaces $Y$ or $Z$ is isomorphic to $X$.

\section{Auxiliary results}\label{section2}

\begin{proposition}\label{proposition2NEW}
Let $\lambda$ and $\tau$ be infinite cardinal numbers and suppose that $\lambda$ is uncountable. Let $T\colon \ell_\infty^c(\tau)\to Y$ be an operator that is not bounded below on any sublattice isometric to $c_0(\lambda)$. Then for every $\varepsilon > 0$ there is subset $\Lambda$ of $\tau$ so
that $|\Lambda| < \lambda$ and $$
\| T R_{\tau\setminus \Lambda} \| \leqslant \varepsilon.$$

Consequently, if $Y= \ell_\infty^c(\tau)$ and  $T$ is a projection onto a subspace $X$, then $X$ is isomorphic to a complemented subspace of $\ell_\infty^c(\kappa)$ for some cardinal number $\kappa<\lambda$.\end{proposition}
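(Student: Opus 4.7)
The first claim I would prove by contradiction. Suppose there is $\varepsilon>0$ such that $\|TR_{\tau\setminus\Lambda}\|>\varepsilon$ for every $\Lambda\subseteq\tau$ with $|\Lambda|<\lambda$; I aim to extract a sublattice of $\ell_\infty^c(\tau)$ isometric to $c_0(\lambda)$ on which $T$ is bounded below, contradicting the hypothesis. The plan is a transfinite recursion of length $\lambda$ followed by an application of Lemma~\ref{rosl1}(iii).

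At stage $\alpha<\lambda$, suppose that nonnegative, norm-one vectors $f_\beta\in\ell_\infty^c(\tau)$ with pairwise disjoint countable supports and $\|Tf_\beta\|>\varepsilon/2$ have been built for $\beta<\alpha$. Set $\Lambda_\alpha=\bigcup_{\beta<\alpha}\mbox{supp}\,f_\beta$; since $\lambda$ is uncountable and $|\alpha|<\lambda$, we have $|\Lambda_\alpha|\leqslant|\alpha|\cdot\aleph_0<\lambda$. The standing assumption then yields $g\in\ell_\infty^c(\tau)$ supported in $\tau\setminus\Lambda_\alpha$ with $\|g\|\leqslant 1$ and $\|Tg\|>\varepsilon$. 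Writing $g=g^+-g^-$ with disjointly supported positive and negative parts, at least one has $T$-image of norm exceeding $\varepsilon/2$; rescaling it to have supremum norm $1$ produces the desired $f_\alpha$.

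Now define $\Phi\colon c_0(\lambda)\to\ell_\infty^c(\tau)$ by $\Phi(g)=\sum_{\alpha<\lambda}g(\alpha)f_\alpha$. The sum lies in $\ell_\infty^c(\tau)$ since only countably many coefficients are non-zero and the $f_\alpha$'s have disjoint countable supports. Because the $f_\alpha$ are nonnegative, normalized and disjointly supported, $\Phi$ is a linear isometry whose image is a vector sublattice of $\ell_\infty^c(\tau)$ isometric to $c_0(\lambda)$. Applying Lemma~\ref{rosl1}(iii) to $T\Phi\colon c_0(\lambda)\to Y$, which satisfies $\|T\Phi e_\alpha\|=\|Tf_\alpha\|>\varepsilon/2$ for every $\alpha<\lambda$, yields a subset $\Lambda'\subseteq\lambda$ of cardinality $\lambda$ for which $T\Phi|_{c_0(\Lambda')}$ is bounded below. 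Then $T$ is bounded below on $\Phi(c_0(\Lambda'))$, a sublattice of $\ell_\infty^c(\tau)$ isometric to $c_0(\lambda)$, contradicting the hypothesis.

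For the consequence, apply the first assertion with $\varepsilon=1/2$ to obtain $\Lambda\subset\tau$ with $|\Lambda|<\lambda$ and $\|TP_{\tau\setminus\Lambda}\|\leqslant 1/2$. Let $\iota_\Lambda\colon\ell_\infty^c(\Lambda)\hookrightarrow\ell_\infty^c(\tau)$ denote the canonical inclusion, and set $S=T\iota_\Lambda\colon\ell_\infty^c(\Lambda)\to X$ and $Q=R_\Lambda|_X\colon X\to\ell_\infty^c(\Lambda)$. For $x\in X$ the projection identity $Tx=x$ gives $\|x-SQx\|=\|TP_{\tau\setminus\Lambda}x\|\leqslant\|x\|/2$, so $SQ$ is invertible on $X$ via the Neumann series. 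Consequently $Q$ is an isomorphic embedding of $X$ into $\ell_\infty^c(\Lambda)$ whose range is complemented by the idempotent $Q(SQ)^{-1}S$. Hence $X$ is isomorphic to a complemented subspace of $\ell_\infty^c(\kappa)$ with $\kappa=\max(|\Lambda|,\aleph_0)<\lambda$ (the finite-dimensional case being absorbed into $\kappa=\aleph_0$). The main technical obstacle is ensuring that the extracted copy of $c_0(\lambda)$ sits inside $\ell_\infty^c(\tau)$ both isometrically and as a sublattice; this is precisely the role played by the positive-part decomposition and the subsequent renormalization, at the cost of replacing $\varepsilon$ by $\varepsilon/2$.
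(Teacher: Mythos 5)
Your argument is correct and follows essentially the same route as the paper: a maximal (for you, recursively constructed) family of disjointly supported unit vectors with $\|Tf_\alpha\|$ bounded away from zero, whose cardinality is forced below $\lambda$ via Lemma~\ref{rosl1}(iii), followed by the standard Neumann-series perturbation for the complementation statement. The only differences are cosmetic --- you organise the first part as a contradiction via transfinite recursion rather than a maximality argument, and you take positive parts (at the cost of a factor of $2$) to guarantee that the extracted copy of $c_0(\lambda)$ is a genuine sublattice, a point the paper's proof leaves implicit.
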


\begin{proof}
Let $(f_\a)_{\a \in \Gamma}$ be a collection of disjointly supported unit vectors in
$\ell_\infty^c(\tau)$ that is maximal with respect to the property that
$\|T f_\a \|\geqslant \varepsilon$ for each $\a$. Let $\Lambda$ be the union of the
supports of the $f_\a$ ($\alpha\in \Gamma$). In the case where $\Gamma$ is finite, $|\Lambda|<\lambda$ as $\Lambda$ is countable. In the case where $\Gamma$ is infinite, by the
hypothesis on $T$ and Lemma~\ref{rosl1}(iii), we have $|\Gamma| < \lambda$, so that also $|\Lambda| < \lambda$. By the maximality of
the collection $(f_\alpha)_{\alpha\in \Gamma}$, if $f$ is a unit vector whose support is contained
in $\lambda\setminus \Lambda$, then $\|T f\| < \varepsilon$, which implies that $\| T R_{\tau\setminus \Lambda} \| \leqslant \varepsilon.$

For the ``consequently" statement, suppose that $T$ is a projection onto a
subspace $X$.  Then
$$
I_X = (TR_{\tau\setminus \Lambda} + TR_\Lambda)|_{X},
$$
so
$$
\|I_X - (TR_\Lambda)|_X\| \leqslant \varepsilon.
$$
Hence if $\varepsilon < 1$, there is an operator $U$ on $X$ so that
$$
I_X = (UTR_\Lambda)|_{X}.
$$
Thus $I_X$ factors through $\ell_\infty^c(|\Lambda|)$.
  \end{proof}
  The next proposition, which is surely known, puts the hypothesis of Proposition \ref{proposition2NEW} into perspective.
 \begin{proposition}\label{perspective}
Let $\tau$ and $\lambda$ be cardinal numbers and suppose that $\lambda$ is uncountable. Let $E=\ell_\infty^c(\tau)$ and $E_\lambda=c_0(\lambda)$ or let $E=\ell_p(\tau)$ and $E_\lambda=\ell_p(\lambda)$ for some $1<p<\infty$. Suppose that $(x_\alpha)_{\alpha\in \lambda}$ is a transfinite sequence in $E$ that is equivalent to the unit vector basis of $E_\lambda$. Then there is a set $\Lambda \subset \lambda $ with $|\Lambda |= \lambda$ such that $(x_\alpha)_{\alpha\in \Lambda}$ have disjoint supports.\smallskip

Consequently, 
\begin{itemize}
\item[(i)] if $T\colon E\to E_\tau$ is a bounded, linear operator, then there is a set $\Lambda \subset \lambda $ with $|\Lambda |= \lambda$ such that $(Te_\alpha)_{\alpha\in \Lambda}$ have disjoint supports;
\item[(ii)] if $\tau < \lambda$, then $c_0(\lambda)$ is not isomorphic to a subspace of $ \ell_\infty^c(\tau)$.\end{itemize}
  \end{proposition}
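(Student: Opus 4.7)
The plan is to reduce the main assertion to two combinatorial facts about countable subsets of $\tau$, from which a transfinite recursion extracts the disjointly supported subfamily. Each $x_\alpha$ has countable support $S_\alpha\subseteq\tau$ by the definition of the spaces involved. The crucial second fact is that for every $\gamma\in\tau$ the set $A_\gamma:=\{\alpha<\lambda\colon x_\alpha(\gamma)\neq 0\}$ is at most countable, and this is the point at which the equivalence to the unit vector basis of $E_\lambda$ enters.

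To verify the key fact, I would fix $\gamma\in\tau$ and stratify $A_\gamma=\bigcup_{n\in\mathbb{N}}A_{\gamma,n}$, where $A_{\gamma,n}:=\{\alpha\colon |x_\alpha(\gamma)|>1/n\}$. For any finite $F\subseteq A_{\gamma,n}$, choose unimodular scalars $\varepsilon_\alpha$ matching the signs of $x_\alpha(\gamma)$; then the $\gamma$-coordinate of $\sum_{\alpha\in F}\varepsilon_\alpha x_\alpha$ equals $\sum_{\alpha\in F}|x_\alpha(\gamma)|>|F|/n$, while the upper basis constant bounds its norm from above by a constant $C$ in the $c_0(\lambda)$-case and by $C|F|^{1/p}$ in the $\ell_p(\lambda)$-case. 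Rearranging yields $|F|\leqslant Cn$ or $|F|^{1-1/p}\leqslant Cn$ respectively, so each $A_{\gamma,n}$ is finite and $A_\gamma$ is countable. This estimate is precisely what forces the exclusion of $p=1$.

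With the two facts in hand, I would construct $\Lambda=\{\alpha_\beta\colon\beta<\lambda\}$ by transfinite recursion. Having chosen $(\alpha_\gamma)_{\gamma<\beta}$ with pairwise disjoint supports, set $B_\beta:=\bigcup_{\gamma<\beta}S_{\alpha_\gamma}$; this has cardinality at most $|\beta|\cdot\aleph_0<\lambda$ since $\lambda$ is uncountable, and by the key fact the set of indices whose supports meet $B_\beta$, namely $\bigcup_{\delta\in B_\beta}A_\delta$, has cardinality at most $|B_\beta|\cdot\aleph_0<\lambda$. This leaves ample room to pick $\alpha_\beta$ outside both sets, and its support is then disjoint from every $S_{\alpha_\gamma}$ with $\gamma<\beta$. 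The corollaries are then immediate: (i) follows by applying the main claim to $(Te_\alpha)_{\alpha<\lambda}$ (when this sequence is equivalent to the basis of $E_\lambda$, as it is when $T$ is bounded below), and for (ii), any isomorphic copy of $c_0(\lambda)$ in $\ell_\infty^c(\tau)$ would yield, after passing to $\Lambda$, a family of $\lambda$ pairwise disjointly supported non-zero vectors, whence $|\tau|\geqslant\lambda$, contradicting $\tau<\lambda$. I expect the countability of $A_\gamma$ to be the only non-routine step; the rest is standard transfinite bookkeeping.
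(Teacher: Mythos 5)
Your proof is correct and follows essentially the same route as the paper: the heart of both arguments is the observation that only countably many of the $x_\alpha$ can be non-zero at any fixed coordinate, since an uncountable fibre would violate the upper $c_0(\lambda)$- or $\ell_p(\lambda)$-estimate evaluated at that single coordinate (exactly where $p>1$ is needed). The only difference is organisational: the paper extracts the disjointly supported subfamily by taking a maximal such family and reaching a contradiction via pigeonhole, whereas you derive the countability of each fibre $A_\gamma$ explicitly and then build $\Lambda$ by transfinite recursion.
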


  \begin{proof}
Take $\Lambda \subset \lambda$ maximal with respect to the property that the vectors $(x_\alpha)_{\alpha\in \Lambda}$ have disjoint supports.  Let $\Gamma$ be the union of the supports of  $(x_\alpha)_{\alpha\in \Lambda}$. 
Suppose, for contradiction, that  $|\Lambda|<|\lambda|$. Then also $|\Gamma|<|\lambda|$ since $\lambda $ is uncountable.  
  By the  maximality of $(x_\alpha)_{\alpha\in \Lambda}$, each $x_\alpha$ for $\alpha\in \lambda \setminus\Lambda$  is non-zero at some coordinate in $\Gamma$. Consequently, 
  uncountably many $x_\alpha$ are non-zero at the same coordinate $\gamma \in \Gamma$. So, without loss of generality, we have for some $\varepsilon >0$ a sequence of distinct $x_{\alpha_n}$ so that for all $n$ the real part of $x_{\alpha_n}(\gamma)$  is larger than $\varepsilon$.  Then for every $N$, 

\[\Big\|\sum_{n=1}^N x_{\alpha_n}\Big\|  \geqslant \left\{\begin{array}{ll}\varepsilon \cdot N, & \text{if}\, E =\ell_\infty^c(\tau), \\ \frac{\varepsilon \cdot N}{N^{1/p}}, & \text{if}\, E=\ell_p(\tau),\end{array}\right.\] contradicting the fact that $(x_\alpha)_{\alpha\in \lambda}$ is equivalent to the unit vector basis for $E_\lambda$.
  \end{proof}

We thus obtain an analogue of Lemma~\ref{rosl1}(iii) in the case of $\ell_p(\lambda)$ for $p\in (1,\infty)$.

\begin{corollary}\label{lemma21lp}Let $\lambda$ be an uncountable cardinal number and let $p\in (1,\infty)$. Suppose that $T\colon \ell_p(\lambda)\to \ell_p(\lambda)$ is a bounded, linear operator such that $\inf\{\|Te_\alpha\|\colon \alpha<\lambda\}>0$. Then there is a set $\Lambda \subset \lambda $ with $|\Lambda |= \lambda$ such that $TR_\Lambda$ is bounded below.\end{corollary}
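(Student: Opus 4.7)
The plan is to mimic the proof of Proposition~\ref{perspective}, with the role of $(x_\a)$ played by $(Te_\a)_{\a<\lambda}$: first extract a subset $\Lambda\subseteq\lambda$ of cardinality $\lambda$ on which the images $(Te_\a)_{\a\in\Lambda}$ have pairwise disjoint supports, and then deduce the bounded-below conclusion from the standard disjoint-support calculation in $\ell_p$.

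More specifically, I take $\Lambda\subseteq\lambda$ maximal with respect to the property that $(Te_\a)_{\a\in\Lambda}$ have pairwise disjoint supports, set $\G=\bigcup_{\a\in\Lambda}\mathrm{supp}(Te_\a)$, and argue by contradiction that $|\Lambda|<\lambda$ cannot hold. Since every vector in $\ell_p(\lambda)$ has countable support and $\lambda$ is uncountable, $|\Lambda|<\lambda$ forces $|\G|<\lambda$. Maximality of $\Lambda$, together with the hypothesis $\inf_\a\|Te_\a\|>0$ (so that $Te_\b\neq 0$ for every $\b$), then implies that $\mathrm{supp}(Te_\b)$ meets $\G$ for every $\b\in\lambda\setminus\Lambda$. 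A pigeonhole over the coordinates of $\G$---of which there are fewer than $\lambda$, while $\lambda\setminus\Lambda$ has cardinality $\lambda>\aleph_0$---produces a single coordinate $\g\in\G$ at which uncountably many $Te_\b$ are non-zero. A further pigeonhole by modulus and (in the complex case) by argument extracts a sequence of distinct $\b_1,\b_2,\ldots\in\lambda\setminus\Lambda$ together with a constant $c>0$ such that $\mathrm{Re}(Te_{\b_n})(\g)\geqslant c$ for every $n$.

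The contradiction is then the same as in Proposition~\ref{perspective}: for every $N$,
\[
cN\;\leqslant\;\Big|\sum_{n=1}^{N}(Te_{\b_n})(\g)\Big|\;\leqslant\;\Big\|T\sum_{n=1}^{N}e_{\b_n}\Big\|\;\leqslant\;\|T\|\,N^{1/p},
\]
which is impossible for large $N$ since $p>1$. Hence $|\Lambda|=\lambda$. With the vectors $(Te_\a)_{\a\in\Lambda}$ disjointly supported and uniformly norm-bounded below by $c':=\inf_\b\|Te_\b\|>0$, a one-line computation on any $f\in R_\Lambda(\ell_p(\lambda))$ gives
\[
\|T f\|^p \;=\;\sum_{\a\in\Lambda}|f(\a)|^p\,\|Te_\a\|^p \;\geqslant\;(c')^p\|f\|^p,
\]
which is the required conclusion that $TR_\Lambda$ is bounded below.

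The only non-routine ingredient is the cascade of pigeonhole steps producing the sequence $(\b_n)$; everything else is formal. The hypothesis $p>1$ enters precisely in the final tension $cN\leqslant \|T\|\,N^{1/p}$, which is exactly where the $\ell_1$ case would break down and where one sees why this is a genuinely $\ell_p$ (for $p>1$) analogue of Lemma~\ref{rosl1}(iii).
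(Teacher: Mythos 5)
Your argument is correct and is essentially the paper's own: the paper obtains this corollary directly from Proposition~\ref{perspective}(i), whose proof is precisely your maximal-disjoint-family construction followed by the pigeonhole on a common coordinate and the $cN\leqslant\|T\|N^{1/p}$ contradiction, and the concluding disjoint-support computation $\|TR_\Lambda f\|^p=\sum_{\alpha\in\Lambda}|f(\alpha)|^p\|Te_\alpha\|^p\geqslant (c')^p\|f\|^p$ is the implicit final step. (Your phrase ``$\mathrm{Re}(Te_{\beta_n})(\gamma)\geqslant c$'' tacitly requires multiplying by a fixed unimodular scalar after the pigeonhole on arguments, exactly as in the paper's ``without loss of generality''.)
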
   
  
    Although not needed for the present paper, it is worth remarking that the ``consequently" statement in Proposition \ref{perspective} can be improved, as is shown by the following (probably known) simple proposition.
  
   \begin{proposition}\label{intoellinfty}
   Let $\tau < \lambda$ be cardinal numbers. Then $c_0(\lambda)$ is not isomorphic to a subspace of $\ell_\infty(\tau)$.
   \end{proposition}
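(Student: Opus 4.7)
The plan is to derive a contradiction from two successive pigeonhole arguments, exploiting the fact that the norm on $\ell_\infty(\tau)$ is a supremum over only $\tau<\lambda$ coordinates. Suppose $T\colon c_0(\lambda)\to \ell_\infty(\tau)$ is an isomorphic embedding, so that $\|Te_\a\|\geqslant c$ for some $c>0$ and every $\a<\lambda$, where $(e_\a)_{\a<\lambda}$ denotes the canonical unit vector basis of $c_0(\lambda)$. For each $\a<\lambda$, select a coordinate $\b_\a<\tau$ witnessing this lower bound: $|(Te_\a)(\b_\a)|\geqslant c/2$. The case $\lambda=\omega$ is immediate (then $\tau$ must be finite while $c_0$ is infinite-dimensional), so I assume $\lambda$ is uncountable.

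Consider the partition $\lambda=\bigsqcup_{\b<\tau}A_\b$ with $A_\b:=\{\a<\lambda\colon \b_\a=\b\}$. Since $\tau<\lambda$ and $\lambda$ is uncountable, one has $\tau\cdot\aleph_0<\lambda$, so not all of the $A_\b$ can be finite; fix $\b^*<\tau$ for which $A_{\b^*}$ is infinite. An additional finite pigeonhole---into the two signs in the real case, or into the four quadrants of the complex plane followed by a rotation by a suitable unimodular scalar $\eta$ in the complex case---yields an infinite $\Lambda\subseteq A_{\b^*}$ and a constant $c'>0$ such that $\mathrm{Re}\bigl(\eta\,(Te_\a)(\b^*)\bigr)\geqslant c'$ for every $\a\in \Lambda$.

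The contradiction then comes from testing $T$ on finite sums of basis vectors drawn from $\Lambda$. For any finite $F\subseteq \Lambda$, one has $\bigl\|\sum_{\a\in F}e_\a\bigr\|_{c_0(\lambda)}=1$, while
$$\bigl\|T\bigl(\textstyle\sum_{\a\in F}e_\a\bigr)\bigr\|_{\ell_\infty(\tau)}\geqslant \Bigl|\sum_{\a\in F}(Te_\a)(\b^*)\Bigr|\geqslant \mathrm{Re}\Bigl(\eta\sum_{\a\in F}(Te_\a)(\b^*)\Bigr)\geqslant c'\,|F|.$$
Letting $|F|\to\infty$ contradicts $\|T\|<\infty$.

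I expect no serious obstacle: this is essentially the textbook pigeonhole proof that $c_0(\omega_1)$ does not embed into $\ell_\infty$, lifted to arbitrary cardinals. The one mildly delicate point is that when $\lambda$ is singular with $\mathrm{cf}(\lambda)\leqslant \tau$ one cannot arrange $|A_{\b^*}|=\lambda$---only that $A_{\b^*}$ is infinite---but infiniteness is all that is needed to make $\|T\|$ blow up. Note also that this proposition is genuinely stronger than Proposition~\ref{perspective}(ii), since $\ell_\infty^c(\tau)\subseteq \ell_\infty(\tau)$; unlike the proof there, the present argument uses only the sup-norm structure of $\ell_\infty(\tau)$ and not the countability of supports.
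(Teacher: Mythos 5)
Your proof is correct and follows essentially the same route as the paper: a pigeonhole over the $\tau$ coordinates combined with the observation that infinitely many basis images uniformly bounded below at a single coordinate make $\bigl\|\sum_{\alpha\in F}Te_\alpha\bigr\|_{\ell_\infty(\tau)}$ grow linearly in $|F|$ while $\bigl\|\sum_{\alpha\in F}e_\alpha\bigr\|_{c_0(\lambda)}=1$. The paper phrases this by showing each fiber $A_t=\{\alpha\colon x_\alpha(t)\neq 0\}$ is countable and that $\bigcup_{t\in\tau}A_t$ then has cardinality at most $\omega\cdot\tau=\tau<\lambda$, whereas you extract a single infinite fiber of witnessing coordinates; the difference is cosmetic.
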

   \begin{proof}
   Since the other cases are clear, we assume that $\tau$ is infinite and hence that $\lambda$ is uncountable. Suppose, for contradiction, that $(x_\alpha)_{\alpha\in \lambda}$ is a set of vectors in $\ell_\infty(\tau)$ that is equivalent to the unit vector basis of $c_0(\lambda)$. For each $t \in \tau$, the set $A_t$ of all $\alpha \in \lambda$ for which $x_\alpha(t)$ is not zero must be countable (this was already used in the proof of   Proposition \ref{perspective}). But then the cardinality of $\bigcup_{t\in \tau} A_t$ is at most 
   $\omega \cdot \tau = \tau <\lambda$. 
   \end{proof}
   
We also obtain a characterisation of complemented subspaces of $\ell_\infty^c(\omega_1)$ that do not embed into $\ell_\infty$.
 
\begin{corollary}\label{isomorphictoellinfty}Every infinite-dimensional, complemented subspace of $\ell_\infty^c(\omega_1)$ that does not contain a subspace isomorphic to $c_0(\omega_1)$ is isomorphic to $\ell_\infty$.\end{corollary}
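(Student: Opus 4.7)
The plan is to read the statement as a direct application of Proposition~\ref{proposition2NEW}, followed by an invocation of Lindenstrauss' theorem on complemented subspaces of $\ell_\infty$. Let $X$ be an infinite-dimensional complemented subspace of $\ell_\infty^c(\omega_1)$ that contains no isomorphic copy of $c_0(\omega_1)$, and fix a projection $P\in\mathscr{B}(\ell_\infty^c(\omega_1))$ with range $X$.

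First I would verify that $P$ is not bounded below on any sublattice of $\ell_\infty^c(\omega_1)$ isometric to $c_0(\omega_1)$: if it were, then, viewing such a sublattice $L$ as a copy of $c_0(\omega_1)$, the restriction $P|_L$ would be an isomorphism onto $P(L)\subseteq X$, producing a subspace of $X$ isomorphic to $c_0(\omega_1)$, contrary to hypothesis. This is the contrapositive of the obvious direction.

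Next I would apply the ``consequently'' clause of Proposition~\ref{proposition2NEW} with $\tau=\lambda=\omega_1$. That clause, together with the previous step, yields a cardinal $\kappa<\omega_1$ such that $X$ is isomorphic to a complemented subspace of $\ell_\infty^c(\kappa)$. Since $\kappa$ is countable and $X$ is infinite-dimensional, the convention $\ell_\infty^c(\omega)=\ell_\infty$ from the introduction implies that $X$ is in fact isomorphic to an infinite-dimensional complemented subspace of $\ell_\infty$.

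Finally, I would invoke Lindenstrauss' theorem (cited in the introduction), which states that every infinite-dimensional complemented subspace of $\ell_\infty$ is isomorphic to $\ell_\infty$; this gives $X\cong\ell_\infty$. No step is really an obstacle here: the entire content of the corollary is packaged in Proposition~\ref{proposition2NEW}, and the only thing one must notice is that ``no copy of $c_0(\omega_1)$ in $X$'' suffices to trigger the hypothesis about sublattices, because one can pull back any would-be sublattice copy through the projection $P$.
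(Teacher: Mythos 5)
Your proposal is correct and follows essentially the same route as the paper: verify that the projection $P$ onto $X$ cannot be bounded below on any sublattice isometric to $c_0(\omega_1)$ (since its image under $P$ would land in $X$), apply the ``consequently'' clause of Proposition~\ref{proposition2NEW} to embed $X$ complementably into $\ell_\infty^c(\kappa)=\ell_\infty$ for some countable $\kappa$, and finish with Lindenstrauss' theorem. The paper's proof is merely a terser statement of exactly these steps.
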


\begin{proof}Let $X$ be an infinite-dimensional, complemented subspace of $\ell_\infty^c(\omega_1)$ that does not contain a subspace isomorphic to $c_0(\omega_1)$. Let $P$ be any projection onto $X$. By Proposition~\ref{proposition2NEW}, $X$ is isomorphic to a complemented subspace of $\ell_\infty^c(\omega)=\ell_\infty$ and hence is isomorphic to $\ell_\infty$.\end{proof}  

In view of Corollary~\ref{isomorphictoellinfty}, in order to complete the classification of the complemented subspaces of $\ell_\infty^c(\lambda)$, that is to prove Theorem~\ref{complemented}, we need to show that a complemented subspace $X$ of $\ell_\infty^c(\lambda)$ that contains an isomorphic copy of  $c_0(\lambda)$ must be isomorphic to $\ell_\infty^c(\lambda)$. We begin by showing that such an $X$ must contain a subspace isomorphic to  $\ell_\infty^c(\lambda)$.

 \begin{proposition}\label{proposition3}
Let $\kappa$ and $\lambda$ be uncountable cardinal numbers and let $T\colon \ell_\infty^c(\lambda)\to \ell_\infty^c(\lambda)$ be an operator. 
Assume that $T$ is bounded below on a subspace of $ \ell_\infty^c(\lambda)$ that is isomorphic to $c_0(\kappa)$.
Then there is a subspace $Y$ of $\ell_\infty^c(\lambda)$ that is isometric to $\ell_\infty^c(\kappa)$ such that $T|_{Y}$ is bounded below.  \end{proposition}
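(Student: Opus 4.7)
The plan is to apply Proposition~\ref{perspective} twice and then to construct $Y$ as the image of an isometric pointwise-summation embedding, after a further refinement of the index set. Write $(x_\alpha)_{\alpha<\kappa}$ for the image of the unit vector basis of $c_0(\kappa)$ under the given isomorphism, normalised so that $\|x_\alpha\|_\infty = 1$; by hypothesis this sequence is equivalent to the $c_0(\kappa)$-basis, and since $T$ is bounded below on its closed linear span, so is $(Tx_\alpha)_{\alpha<\kappa}$. Applying Proposition~\ref{perspective} first to $(x_\alpha)$ and then to $(Tx_\alpha)$ restricted to the subset obtained, one produces $\Lambda\subseteq\kappa$ with $|\Lambda|=\kappa$ such that, writing $S_\alpha = \mbox{supp}\,x_\alpha$ and $S_\alpha'=\mbox{supp}\,Tx_\alpha$, both $\{S_\alpha\}_{\alpha\in\Lambda}$ and $\{S_\alpha'\}_{\alpha\in\Lambda}$ are pairwise disjoint countable families. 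Set $c := \inf_{\alpha\in\Lambda}\|Tx_\alpha\|_\infty > 0$.

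The disjointness of $(S_\alpha)_{\alpha\in\Lambda}$ lets me define, for any $\Lambda'\subseteq\Lambda$, an isometric embedding $\Phi_{\Lambda'}\colon \ell_\infty^c(\Lambda')\to\ell_\infty^c(\lambda)$ by the pointwise prescription $(\Phi_{\Lambda'} f)(\gamma) = f(\alpha)\,x_\alpha(\gamma)$ when $\gamma\in S_\alpha$, and $0$ otherwise; each $\Phi_{\Lambda'} f$ lies in $\ell_\infty^c(\lambda)$ because its support is contained in $\bigcup_{\alpha\in\mbox{supp}\,f}S_\alpha$, a countable union of countable sets, and the disjointness gives $\|\Phi_{\Lambda'} f\|_\infty = \|f\|_\infty$. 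The candidate subspace will be $Y:=\Phi_{\Lambda'}(\ell_\infty^c(\Lambda'))$ for a suitable $\Lambda'\subseteq\Lambda$ of cardinality $\kappa$, so that $Y$ is isometric to $\ell_\infty^c(\Lambda')\cong\ell_\infty^c(\kappa)$.

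For the bounded-below-ness of $T|_Y$, given $y=\Phi_{\Lambda'} f$ with $\|f\|_\infty=1$, choose $\alpha_0\in\Lambda'$ with $|f(\alpha_0)|\geq 1/2$ and $\xi_0\in S_{\alpha_0}'$ with $|Tx_{\alpha_0}(\xi_0)|\geq c/2$. Disjointness of $(S_\beta')_{\beta\in\Lambda}$ yields $(Tx_\beta)(\xi_0)=0$ for $\beta\neq\alpha_0$, so for $z$ in the closed subspace $\Phi_{\Lambda'}(c_0(\Lambda'))$ (on which the norm-convergent series $\sum_\beta (\Phi_{\Lambda'}^{-1}z)(\beta)\,x_\beta$ may be commuted past $T$) the exact identity $(Tz)(\xi_0)=(\Phi_{\Lambda'}^{-1}z)(\alpha_0)\,Tx_{\alpha_0}(\xi_0)$ holds. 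The main obstacle I anticipate is propagating this identity to general $z\in Y$, since $c_0(\Lambda')$ is not norm-dense in $\ell_\infty^c(\Lambda')$ and continuity of $T$ alone does not suffice; I would address it by choosing $\Lambda'$ so that the singular component of $T^*\delta_\xi$ (i.e.\ the part vanishing on $c_0(\lambda)$) annihilates $\Phi_{\Lambda'}(\ell_\infty^c(\Lambda'))$ for every $\xi\in\bigcup_{\alpha\in\Lambda'}S_\alpha'$---intuitively, by removing from $\Lambda$ the small index set on which $T$ behaves in a ``Banach-limit-like'' way that is invisible to the $c_0$-part of the embedding. Once that is arranged, $|(Ty)(\xi_0)| = |f(\alpha_0)\,Tx_{\alpha_0}(\xi_0)| \geq c/4$, so $\|Ty\|_\infty \geq (c/4)\|y\|_\infty$, as required.
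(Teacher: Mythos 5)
Your set-up (two applications of Proposition~\ref{perspective} to disjointify both the $x_\alpha$ and the $Tx_\alpha$, followed by the isometric embedding $\Phi_{\Lambda'}$) matches the first half of the paper's argument, and reducing bounded-below-ness to the scalar identity $(Ty)(\xi_0)=f(\alpha_0)\,(Tx_{\alpha_0})(\xi_0)$ is the right idea. But the gap is exactly where you locate it, and it is not a deferrable technicality: for $f\in\ell_\infty^c(\Lambda')$ with infinite support the functional $T^*\delta_{\xi_0}$ may have a nonzero ``singular'' component on the copy of $\ell_\infty^c(\Lambda')$ carried by $\Phi_{\Lambda'}$, and there are $\kappa$-many coordinates $\xi$ (one for each $\alpha\in\Lambda'$) for each of which this component could live on a different part of the index set. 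Producing a single $\Lambda'$ of cardinality $\kappa$ that simultaneously tames all of these functionals is a genuine combinatorial problem about relatively disjoint families of finitely additive measures; it is precisely the content of Rosenthal's Lemma~1.1 of \cite{ros1} (quoted in the paper as Lemma~\ref{Rosenthal}, and in the form actually needed as Lemma~\ref{rosl1}(i)). Your sentence ``I would address it by choosing $\Lambda'$ so that the singular component \dots\ annihilates \dots'' asserts the conclusion of that lemma without proving it; moreover, exact annihilation is more than one can expect to arrange in general --- Rosenthal's lemma only yields $|\mu_\xi|(\Lambda'\setminus\{\alpha\})<\varepsilon$, i.e.\ an $\varepsilon$-perturbation of your identity, which still suffices for bounded-below-ness but must be argued.

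There is also a technical obstruction to running the measure-theoretic argument directly inside $\ell_\infty^c$: the set functions $A\mapsto (T\Phi_{\Lambda}\mathbf{1}_A)(\xi)$ are defined only for countable $A\subseteq\Lambda$, so Rosenthal's lemma (stated for measures on the full power set) does not apply verbatim. The paper circumvents both problems at once: it regards $\ell_\infty^c(\kappa)$ as a subspace of the injective space $\ell_\infty(\lambda)$, extends $T\Phi_\Lambda$ to an operator $U$ on all of $\ell_\infty(\lambda)$ by norm-preserving extension, applies Lemma~\ref{rosl1}(i) to $U$ (which is bounded below on $c_0(\kappa)$) to obtain $\Gamma\subseteq\kappa$ of cardinality $\kappa$ with $U|_{\ell_\infty(\Gamma)}$ bounded below, and then notes that $U$ agrees with $T\Phi_\Lambda$ on $\ell_\infty^c(\Gamma)$, so $Y=\Phi_\Lambda[\ell_\infty^c(\Gamma)]$ works. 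To complete your proof you should either insert this extension-plus-Rosenthal step or prove a version of the disjointification lemma for finitely additive measures on the countable--cocountable algebra; as written, the key step is missing.
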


\begin{proof}
By Proposition \ref{perspective}  there is a transfinite sequence $(x_\b)_{\b<\kappa}$ of disjointly supported unit vectors in  $\ell_\infty^c(\lambda)$ so that the restriction of $T$ to $\overline{{\rm span}}\,  \{x_\b\colon \b<\kappa\}$ is bounded below. 
Define an isometry $S\colon \ell_\infty^c(\kappa)\to \ell_\infty^c(\lambda)$ by $$(Sz)(\gamma) = \left\{\begin{array}{ll}z(\beta)x_\beta(\gamma), & \gamma\in {\rm supp}\,x_\beta,\\
0, & {\rm otherwise}\end{array}\right.\qquad(z\in \ell_\infty^c(\kappa), \beta<\kappa).$$
The condition on $T$ says that the operator $TS\colon \ell_\infty^c(\kappa) \to \ell_\infty^c(\lambda)$ is bounded below on $c_0(\kappa)$.
Regard $\ell_\infty^c(\kappa) $ as the subspace $\ell_\infty^c([0,\kappa))$ of $\ell_\infty(\lambda)$. By injectivity of $\ell_\infty(\lambda)$, the operator $TS$ has a norm-preserving extension to an operator $U\colon \ell_\infty(\lambda)\to \ell_\infty(\lambda)$    \cite[Proposition 2.f.2]{lt}. The operator $U$ is an
isomorphism on $c_0(\kappa)$, hence by Lemma~\ref{rosl1}(i), there is a subset $\Gamma$ of $\kappa$ with cardinality $\kappa$ such that the restriction of $U$ to $\ell_\infty(\Gamma)$ is an isomorphism. But $U$ maps $\ell_\infty^c(\Gamma)$ into $\ell_\infty^c(\lambda)$, which is to say that $TS$ is an isomorphism on $\ell_\infty^c(\Gamma)$. \ that $Y:= S[\ell_\infty^c(\Gamma)]$ is isometric to $\ell_\infty^c(\kappa)$ and $T|_{Y}$ is bounded below.
\end{proof}
The following proposition is essentially known in the sense that it can be easily deduced from a string of standard results concerning Grothendieck spaces; however, we offer here a direct proof.

\begin{proposition}\label{grothendieck}For every cardinal number $\lambda$, $\ell_\infty^c(\lambda)$ is a Grothendieck space.\end{proposition}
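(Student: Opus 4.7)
The plan is to use the characterization that a Banach space $X$ is a Grothendieck space if and only if every bounded linear operator $T\colon X\to c_0$ is weakly compact, and to verify this for $X=\ell_\infty^c(\lambda)$ by a direct reduction to the classical case.

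The structural observation underpinning the argument is that any countable family of vectors in $\ell_\infty^c(\lambda)$ has countable total support: given a bounded sequence $(f_n)_{n=1}^\infty$ in $\ell_\infty^c(\lambda)$, the set $A:=\bigcup_{n}\mathrm{supp}\,f_n$ is a countable union of countable sets, hence countable. Viewing $\ell_\infty(A)$ as the isometric subspace of $\ell_\infty^c(\lambda)$ consisting of functions supported in $A$ (extended by zero outside $A$), we have $(f_n)\subset\ell_\infty(A)\cong\ell_\infty$ (enlarging $A$ to a countably infinite set if necessary).

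Given an arbitrary operator $T\colon\ell_\infty^c(\lambda)\to c_0$ and a bounded sequence $(f_n)\subset\ell_\infty^c(\lambda)$, I would form $A$ as above. The restriction $T|_{\ell_\infty(A)}\colon\ell_\infty(A)\to c_0$ is weakly compact by Grothendieck's classical theorem for $\ell_\infty$, so $(Tf_n)$ admits a weakly convergent subsequence. By the Eberlein-Smulian theorem this shows that $T$ is weakly compact.

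The only potential obstacle lies in the passage between the operator formulation and the classical one (weak*-null sequences in $X^*$ are weakly null), but this is standard. A weak*-null sequence $(x_n^*)\subset(\ell_\infty^c(\lambda))^*$ induces an operator $T\colon\ell_\infty^c(\lambda)\to c_0$ via $Tf=(x_n^*(f))_n$, and weak compactness of $T$ (and hence of $T^*$) places $(x_n^*)=(T^*e_n)$ in a relatively weakly compact subset of $(\ell_\infty^c(\lambda))^*$, forcing weak nullity since a weak*-null sequence can have only $0$ as a weak cluster point. The whole argument is short because the countable-support condition defining $\ell_\infty^c(\lambda)$ dovetails perfectly with the countable character of bounded sequences and the separability of $c_0$; all the real work is done by the classical Grothendieck theorem for $\ell_\infty$.
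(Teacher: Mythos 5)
Your proof is correct, but it takes a somewhat different route from the paper's. Both arguments hinge on the same structural fact—that any separable (or countably generated) piece of $\ell_\infty^c(\lambda)$ sits inside $E_\Lambda\cong\ell_\infty$ for a countable $\Lambda\subset\lambda$—but they exploit it differently. The paper argues by contradiction: a non-weakly compact $T\colon\ell_\infty^c(\lambda)\to c_0$ must, by Pe\l czy\'nski's theorem \cite{pelczynski}, be bounded below on a copy $Y$ of $c_0$; since $Y$ is separable it lies in some $E_\Lambda\cong\ell_\infty$, and Rosenthal's Lemma~\ref{rosl1}(i) then upgrades this to $T$ being bounded below on a copy of $\ell_\infty$, which is absurd for an operator into $c_0$. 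You instead argue directly: an arbitrary bounded sequence has countable total support, so its image under $T$ lands in the image of a copy of $\ell_\infty$ under a weakly compact operator (by Grothendieck's classical theorem \cite{groth}), and Eberlein--\v{S}mulian finishes the job. Your version is shorter and avoids both Pe\l czy\'nski's theorem and Rosenthal's lemma, at the cost of quoting the classical Grothendieck property of $\ell_\infty$ as a black box; the paper's version is marginally more self-contained within its own toolkit, since Lemma~\ref{rosl1} and the Pe\l czy\'nski $c_0$-fixing theorem are already in play elsewhere (indeed the same contradiction pattern reappears in the proof of Lemma~\ref{lemmaW}). Your closing paragraph on the equivalence between the weak*-null-sequence formulation and the operator-into-$c_0$ formulation is the standard argument and is handled correctly (it is what the paper's citation of \cite[p.~150]{diestel} packages up).
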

\begin{proof} If $\ell_\infty^c(\lambda)$ is not a Grothendieck space there exists an operator $T\colon \ell_\infty^c(\lambda)\to c_0$ that is not weakly compact (see \cite[p.~150]{diestel}). Then by \cite{pelczynski} there is a subspace $Y$ of $\ell_\infty^c(\lambda)$ isomorphic to $c_0$ such that $T|_{Y}$ is bounded below. Then $Y$ is contained in  $E_\Lambda$ for some countable set $\Lambda\subset \lambda$. In particular, $T|_{E_\Lambda}$ can be regarded as operator from $\ell_\infty$ into $c_0$ that is bounded below on a copy of $c_0$. By Lemma~\ref{rosl1}(i), $T$ must be bounded below on a copy of $\ell_\infty$; a contradiction as $T$ maps into $c_0$.\end{proof}

We need a consequence of \cite[Lemma 1.1]{ros1}.

\begin{lemma}\label{Rosenthal}
Let $\Gamma$ be uncountable and let $T$ be an operator on $\ell_\infty(\Gamma)$ such that $T$ is the identity on $c_0(\Gamma)$.  Then for every $\varepsilon > 0$  there is $\Gamma' \subset \Gamma$ with $|\Gamma'| = |\Gamma|$ so that $$\|(R_{\Gamma'} T)|_{\ell_\infty(\Gamma')} - I_{\ell_\infty(\Gamma')} \|  < \varepsilon,$$ where $R_{\Gamma'}\colon \ell_\infty(\Gamma) \to \ell_\infty(\Gamma')$ is the restriction operator.
\end{lemma}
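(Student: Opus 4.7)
The plan is to recast the desired estimate in terms of a family of finitely additive measures on $2^\Gamma$ and then invoke Rosenthal's lemma in its uncountable form. For each $\gamma \in \Gamma$, the composite $\delta_\gamma \circ T$ is a bounded linear functional on $\ell_\infty(\Gamma)$ of norm at most $\|T\|$; because $T$ fixes every coordinate vector $e_\beta$ (which lies in $c_0(\Gamma)$), this functional agrees with $\delta_\gamma$ on all $e_\beta$. Consequently $\mu_\gamma := \delta_\gamma \circ T - \delta_\gamma$ defines a uniformly bounded family of finitely additive signed measures on $2^\Gamma$, each vanishing on singletons and therefore on every finite subset of $\Gamma$.

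Next I would carry out the routine computation: for any $\Gamma' \subseteq \Gamma$ and any $f \in \ell_\infty(\Gamma')$ identified with its extension by zero to $\ell_\infty(\Gamma)$, one has $(Tf)(\gamma) = f(\gamma) + \mu_\gamma(f)$ for every $\gamma \in \Gamma$. Restricting to $\gamma \in \Gamma'$ and using that $f$ is supported in $\Gamma'$ yields
$$
\bigl\|(R_{\Gamma'}T)|_{\ell_\infty(\Gamma')} - I_{\ell_\infty(\Gamma')}\bigr\| \;\leqslant\; \sup_{\gamma \in \Gamma'} |\mu_\gamma|(\Gamma'),
$$
so the task reduces to producing a subset $\Gamma' \subseteq \Gamma$ with $|\Gamma'| = |\Gamma|$ such that $|\mu_\gamma|(\Gamma') < \e$ for every $\gamma \in \Gamma'$.

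The selection step would be an application of the uncountable analogue of Rosenthal's lemma \cite[Lemma 1.1]{ros1}, applied to the uniformly bounded family $(|\mu_\gamma|)_{\gamma \in \Gamma}$ of positive finitely additive measures on $2^\Gamma$ together with the pairwise disjoint family of singletons $(\{\gamma\})_{\gamma \in \Gamma}$. This would produce $\Gamma' \subseteq \Gamma$ with $|\Gamma'| = |\Gamma|$ such that $|\mu_\gamma|(\Gamma' \setminus \{\gamma\}) < \e$ for every $\gamma \in \Gamma'$; since $|\mu_\gamma|(\{\gamma\}) = 0$, this delivers precisely the estimate required in the preceding step and completes the proof.

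The main obstacle I expect is verifying the uncountable, finitely additive form of Rosenthal's lemma. The measures $\mu_\gamma$ need not be countably additive --- they can behave like Banach limits, assigning positive mass to uncountable sets while vanishing on every singleton --- so $|\mu_\gamma|(\Gamma')$ cannot be controlled by the crude bound $\sum_{\beta \in \Gamma'} |\mu_\gamma|(\{\beta\}) = 0$. Nonetheless the standard selection argument behind the countable Rosenthal lemma admits a transfinite variant for finitely additive measures on uncountable index sets, which is the version being invoked here.
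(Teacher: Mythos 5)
Your proof is correct and follows essentially the same route as the paper's: identify $\ell_\infty(\Gamma)^*$ with finitely additive measures on $2^\Gamma$, pull back the point masses by $T^*$, and apply the uncountable Rosenthal lemma \cite[Lemma 1.1]{ros1} to the singletons to extract $\Gamma'$ with $|\mu_\gamma|(\Gamma'\setminus\{\gamma\})<\varepsilon$ for $\gamma\in\Gamma'$. The only cosmetic difference is that you work with $T^*\delta_\gamma-\delta_\gamma$ (which kills all singletons) while the paper works with $T^*\delta_\gamma$ directly and uses $\mu_\gamma(\{\gamma\})=1$; the cited lemma is exactly the version needed, so the ``obstacle'' you flag is not an issue.
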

\begin{proof}
We identify $\ell_\infty(\Gamma)^*$ with the finitely additive, scalar-valued measures on the $\sigma$-algebra of all subsets of $\Gamma$.  For $\gamma \in \Gamma$, set $\mu_\gamma = T^* \delta_\gamma$, where $ \delta_\gamma$ is the point mass measure at $\gamma$.  By \cite[Lemma 1.1]{ros1} there is $\Gamma' \subset \Gamma$ with $|\Gamma'| = |\Gamma|$ so that for all $\gamma \in \Gamma'$ we have $|\mu_\gamma| (\Gamma' \setminus \{\gamma\}) < \varepsilon$.
\end{proof}

\begin{proof}   
[Proof of Theorem~\ref{complemented}]
We need to show that if $X$ is a subspace of $\ell_\infty^c(\lambda)$ isomorphic to $\ell_\infty^c(\lambda)$, then $X$ contains a subspace isomorphic to $\ell_\infty^c(\lambda)$ that is complemented in $\ell_\infty^c(\lambda)$.The case where $\lambda$ is countable follows from injectivity of $\ell_\infty$, so let us suppose that $\lambda$ is uncountable. Let $\lessdot$ denote the order on $X$ that is induced from its isomorphism with $\ell_\infty^c(\lambda)$. 
Applying Proposition \ref{perspective} twice in succession,  we get a transfinite sequence $\{f_\gamma\colon \gamma < \lambda\}$ of unit vectors in $X$ that are disjoint both relative to the order structure on $X$ given by $\lessdot$ and the order structure on $\ell_\infty^c(\lambda)$ given by its pointwise ordering $<$. Let $Z$ be the lattice closure in $(\ell_\infty^c(\lambda), <)$ of the linear span of $\{f_\gamma\colon \gamma < \lambda\}$ (so that $Z$ is isometrically isomorphic to $\ell_\infty^c(\lambda)$ and there is a norm-one projection $Q$ from  $\ell_\infty^c(\lambda)$ onto $Z$).  By replacing $X$ itself with a subspace, we might as well assume that in $(X,\lessdot)$ the $f_\gamma$ form the unit vector basis for $c_0(\lambda)$ and hence that in $(X,\lessdot)$ the space $X$ is the pointwise closure of the $f_\gamma$. Let $J\colon (Z, <) \to (X,\lessdot)$ denote the natural surjective order isomorphism that is the identity on the set $\{f_\gamma\colon \gamma<\lambda\}$. Now $$(JQ)|_{X}\colon X \to X$$ is an operator on $X$ that is the identity on $\{f_\gamma\colon \gamma < \lambda\}$, so by Lemma \ref{Rosenthal} (applied to any extension of the operator to an operator on the $\ell_\infty(\lambda)$ lattice generated by $(X,\lessdot)$) we have a subset $\Lambda$ of $\lambda$ of cardinality $\lambda$ so that
\begin{equation}
\|(R_\Lambda JQ)|_{X_\Lambda} - I_{X_\Lambda}\| < \varepsilon
\end{equation}
where $X_\Lambda$ is the pointwise closure of $\{f_\gamma\colon \gamma \in \Lambda \}$ in $(X, \lessdot)$,  $DR_\Lambda$ is the restriction mapping on $(X,\lessdot)$, and $\varepsilon > 0$ can be as small as we want; in particular, $\varepsilon < 1$.  So there is an automorphism $U$ on $X_\Lambda$ so that $UR_\Lambda JQ$ is the identity on $X_\Lambda$ and hence $UR_\Lambda JQ$ is a projection from $\ell_\infty^c(\lambda)$ onto $X_\Lambda$. Since $X_\Lambda$ is isomorphic to $\ell_\infty^c(\lambda)$, this completes the proof.\end{proof}

The above proof of Theorem~\ref{complemented} can be adjusted to get the following stronger result.

\begin{proposition}\label{stronger}
Let $\kappa$ and $\lambda$ be infinite cardinal numbers. If $X$ is a subspace of $\ell_\infty^c(\lambda)$ that is isomorphic to $\ell_\infty^c(\kappa)$, then there is a subspace $Y$ of X that is isomorphic to $\ell_\infty^c(\kappa)$ and complemented in $\ell_\infty^c(\lambda)$.  
\end{proposition}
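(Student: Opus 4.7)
The plan is to follow the blueprint of the proof of Theorem~\ref{complemented} almost verbatim, with $\kappa$ playing the role previously played by $\lambda$. The cases where $\kappa$ is finite or countable are handled at once: if $\kappa$ is finite then $X$ is finite-dimensional and automatically complemented; if $\kappa=\omega$ then $X\cong \ell_\infty$ is injective, so $X$ itself is already complemented in $\ell_\infty^c(\lambda)$ and we set $Y=X$. From here on assume $\kappa$ is uncountable (which forces $\lambda$ to be uncountable as well).

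Transport the lattice order of $\ell_\infty^c(\kappa)$ to $X$ via the given isomorphism and denote it by $\lessdot$. Two successive applications of Proposition~\ref{perspective}, exactly as in the proof of Theorem~\ref{complemented}, yield unit vectors $\{f_\gamma\colon \gamma<\kappa\}\subset X$ that are disjoint both with respect to $\lessdot$ and with respect to the pointwise order on $\ell_\infty^c(\lambda)$. Let $Z$ be the lattice closure of $\mathrm{span}\{f_\gamma\colon \gamma<\kappa\}$ in $\ell_\infty^c(\lambda)$; then $Z$ is isometric to $\ell_\infty^c(\kappa)$ and there is a norm-one projection $Q\colon \ell_\infty^c(\lambda)\to Z$. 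After replacing $X$ by the $\lessdot$-pointwise closure of $\mathrm{span}\{f_\gamma\}$, we may assume that the $f_\gamma$ are the canonical $c_0(\kappa)$-basis of $(X,\lessdot)$. Let $J\colon (Z,<)\to (X,\lessdot)$ be the natural order isomorphism fixing each $f_\gamma$; then $(JQ)|_X$ is an operator on $X$ fixing every $f_\gamma$.

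The final step is Lemma~\ref{Rosenthal} applied at the cardinal $\kappa$: using injectivity of $\ell_\infty(\kappa)$, extend $(JQ)|_X$ to an operator on the abstract $\ell_\infty(\kappa)$-lattice generated by $(X,\lessdot)$, which is then the identity on the $c_0(\kappa)$-copy spanned by the $f_\gamma$. Since $\kappa$ is uncountable, Lemma~\ref{Rosenthal} supplies $\Lambda\subseteq\kappa$ with $|\Lambda|=\kappa$ such that $\|(R_\Lambda JQ)|_{X_\Lambda}-I_{X_\Lambda}\|<\varepsilon$, where $X_\Lambda$ is the $\lessdot$-pointwise closure of $\{f_\gamma\colon \gamma\in\Lambda\}$ and $R_\Lambda$ denotes the $\lessdot$-restriction. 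For $\varepsilon<1$ there is an automorphism $U$ of $X_\Lambda$ making $UR_\Lambda JQ$ a projection from $\ell_\infty^c(\lambda)$ onto $X_\Lambda\cong \ell_\infty^c(\kappa)$, as required. The main departure from the $\kappa=\lambda$ case is that Lemma~\ref{Rosenthal} must be invoked at the cardinal $\kappa$ rather than $\lambda$, which is why we extend $(JQ)|_X$ to the $\ell_\infty(\kappa)$-lattice generated by $(X,\lessdot)$ rather than to an enlargement inside $\ell_\infty(\lambda)$; this is exactly what guarantees $|\Lambda|=\kappa$ and hence $X_\Lambda\cong \ell_\infty^c(\kappa)$.
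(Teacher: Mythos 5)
Your proposal is correct and is essentially the paper's own argument: the paper gives no separate proof of Proposition~\ref{stronger}, stating only that the proof of Theorem~\ref{complemented} ``can be adjusted'', and you carry out exactly that adjustment, replacing the index cardinal $\lambda$ by $\kappa$ throughout and invoking Lemma~\ref{Rosenthal} on the $\ell_\infty(\kappa)$-lattice generated by $(X,\lessdot)$ so that the resulting set $\Lambda$ has cardinality $\kappa$. The handling of the finite and countable cases via injectivity of $\ell_\infty$ likewise mirrors the countable case of Theorem~\ref{complemented}.
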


\begin{proposition}\label{lpch}
For each uncountable cardinal number $\lambda$ and $p\in [1,\infty)$ the Banach space $\ell_p(\lambda)$ is complementably homogeneous.
\end{proposition}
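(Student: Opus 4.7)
The plan is to find, inside any subspace $Y \subseteq \ell_p(\lambda)$ isomorphic to $\ell_p(\lambda)$, a disjointly supported (or nearly disjointly supported) system of vectors whose closed span is automatically complemented. Fix an isomorphism $T\colon \ell_p(\lambda)\to Y$ and set $x_\alpha=Te_\alpha$, where $(e_\alpha)_{\alpha<\lambda}$ is the canonical unit vector basis of $\ell_p(\lambda)$. Then $(x_\alpha)_{\alpha<\lambda}$ is a bounded family in $Y$ equivalent to the unit vector basis of $\ell_p(\lambda)$; the task is to extract from it a $\lambda$-sized subfamily whose closed linear span is complemented in the ambient $\ell_p(\lambda)$.

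For $p\in (1,\infty)$ I would invoke Proposition~\ref{perspective} directly (with $E=E_\lambda=\ell_p(\lambda)$) to obtain a set $\Lambda\subset \lambda$ with $|\Lambda|=\lambda$ such that the vectors $(x_\alpha)_{\alpha\in \Lambda}$ are pairwise disjointly supported. Put $Z=\overline{\mathrm{span}}\,\{x_\alpha\colon \alpha\in\Lambda\}\subseteq Y$; since disjointly supported vectors in $\ell_p(\lambda)$ are isometrically equivalent to a (rescaled) $\ell_p$-basis, we have $Z\cong \ell_p(|\Lambda|)=\ell_p(\lambda)$. To complement $Z$ in $\ell_p(\lambda)$ I would choose, for each $\alpha\in\Lambda$, a norm-one norming functional $\phi_\alpha \in \ell_q(\lambda)$ of $x_\alpha/\|x_\alpha\|$ ($1/p+1/q=1$), supported on $\mathrm{supp}\, x_\alpha$, and set
\[
Pf \;=\; \sum_{\alpha\in\Lambda} \phi_\alpha(f)\,\frac{x_\alpha}{\|x_\alpha\|} \qquad (f\in \ell_p(\lambda)).
\]
Disjointness of the supports forces $\|Pf\|_p^p=\sum_\alpha |\phi_\alpha(f)|^p \leqslant \sum_\alpha \|f|_{\mathrm{supp}\,x_\alpha}\|_p^p \leqslant \|f\|_p^p$, so $P$ is a norm-one projection onto $Z$.

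For $p=1$, Proposition~\ref{perspective} does not apply, but Lemma~\ref{rosl1}(ii) delivers the conclusion in one stroke. Since $(x_\alpha)_{\alpha<\lambda}$ is equivalent to the $\ell_1(\lambda)$-basis, there is $\delta>0$ with $\|x_\alpha - x_\beta\|\geqslant \delta$ for all $\alpha\neq\beta$. Apply Lemma~\ref{rosl1}(ii) with $X=\ell_1(\lambda)$, $A=Y$, $T=I_{\ell_1(\lambda)}$ and $y_\alpha=x_\alpha$; this yields a subspace of $Y$ isomorphic to $\ell_1(\lambda)$ and complemented in $\ell_1(\lambda)$, as required.

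The only mildly delicate point is recognising that the two ranges of $p$ need different tools: for $p>1$ the quantitative disjointification from Proposition~\ref{perspective} (whose proof uses $N^{1/p}<N$) is what makes a rank-one-per-support projection available, while for $p=1$ disjointification may genuinely fail and one instead relies on Rosenthal's lemma to produce the complemented $\ell_1(\lambda)$-copy from a separation condition. Once the correct tool is invoked, each case reduces to a routine verification, so I do not anticipate a substantive obstacle.
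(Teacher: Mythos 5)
Your proof is correct and follows essentially the same route as the paper, which for $p=1$ simply cites Lemma~\ref{rosl1}(ii) and for $p\in(1,\infty)$ cites Proposition~\ref{perspective}; you have merely filled in the details the paper leaves implicit (the explicit norm-one projection onto the disjointly supported system, and the separation condition needed to invoke Rosenthal's lemma).
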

\begin{proof}The case $p=1$ is covered by Lemma~\ref{rosl1}(ii). When  $p\in (1,\infty)$, the result follows from Proposition \ref{perspective}.
\end{proof}

\begin{remark}\label{lpstronger}It is perhaps worthwhile to note that an analogue of Proposition~\ref{stronger} holds also for $E_\lambda=\ell_p(\lambda)$ ($p\in [1,\infty)$); that is, if $Y$ is a subspace of $E_\lambda$ isomorphic to $E_\kappa$ for some cardinal number $\kappa$, then there is a subspace $X\subseteq Y$ isomorphic to $E_\kappa$ that is complemented in $E_\lambda$. This is because every copy of $E_\kappa$ in $E_\lambda$ is contained in $E_\Lambda$ for some set $\Lambda\subset \lambda$ with $|\Lambda|=\kappa$.\end{remark}

\begin{proposition}\label{singular}Let $\lambda$ and $\kappa$ be infinite cardinal numbers, $p\in [1,\infty)$ and let $X$ be a Banach space. Then the sets
\[\mathscr{S}_{c_0(\lambda)}\big(X, \ell_\infty^c(\kappa)\big) \text{ and } \mathscr{S}_{\ell_p(\lambda)}\big(X, \ell_p(\kappa)\big) \]
are closed under addition.
\end{proposition}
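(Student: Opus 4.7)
The plan is to argue by contradiction and reduce the addition problem to a property of a single operator. Suppose $T_1, T_2$ belong to the relevant singular class but $T := T_1 + T_2$ is bounded below on a subspace $Y\subseteq X$ isomorphic to $E$, where $(E,F)$ is either $(c_0(\lambda), \ell_\infty^c(\kappa))$ or $(\ell_p(\lambda), \ell_p(\kappa))$. Let $(y_\alpha)_{\alpha<\lambda}\subseteq Y$ be the image of the canonical basis of $E$, so that $\|Ty_\alpha\| \geqslant \delta$ for some $\delta>0$, and $(Ty_\alpha)$ is equivalent to the canonical basis of $E$ inside $F$. I would treat $\lambda$ uncountable in detail; the case $\lambda=\omega$ is analogous, using the classical Bessaga--Pe\l czy\'nski disjointification in place of Proposition~\ref{perspective}.

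The first main step is to disjointify $(Ty_\alpha)$ in $F$. In the cases covered by Proposition~\ref{perspective}---namely $c_0(\lambda)\subseteq\ell_\infty^c(\kappa)$ and $\ell_p(\lambda)\subseteq\ell_p(\kappa)$ with $p\in(1,\infty)$---one obtains $\Lambda\subseteq\lambda$ of cardinality $\lambda$ such that the supports $\sigma_\alpha := \mathrm{supp}(Ty_\alpha)$ are pairwise disjoint for $\alpha\in\Lambda$. Applying the coordinate projection $P_{\sigma_\alpha}$ to $Ty_\alpha = T_1 y_\alpha + T_2 y_\alpha$ and using $P_{\sigma_\alpha}(Ty_\alpha) = Ty_\alpha$ gives $\|P_{\sigma_\alpha}T_1 y_\alpha\| + \|P_{\sigma_\alpha}T_2 y_\alpha\| \geqslant \delta$; a pigeonhole argument produces $\Lambda'\subseteq\Lambda$ with $|\Lambda'|=\lambda$ on which, without loss of generality, $\|T_1 y_\alpha\| \geqslant \delta/2$ for every $\alpha\in\Lambda'$.

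To close the contradiction, consider the composition $E_0 \xrightarrow{\sim} \overline{\mathrm{span}}\{y_\alpha:\alpha\in\Lambda'\}\hookrightarrow X\xrightarrow{T_1} F$, where $E_0 = c_0(\Lambda')$ or $\ell_p(\Lambda')$ as appropriate; its basis-image norms are at least $\delta/2$. In the $c_0(\lambda)$ case, Lemma~\ref{rosl1}(iii) furnishes a further $\lambda$-sized subset on which this composition is bounded below; in the $\ell_p(\lambda)$ case with $p\in(1,\infty)$, Corollary~\ref{lemma21lp} (or the disjoint-support argument behind it) plays the same role, since disjointly supported $\ell_p$-vectors with norms bounded below form an $\ell_p$-basic sequence. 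Either way we obtain a copy of $E$ inside $X$ on which $T_1$ is bounded below, contradicting $T_1\in\mathscr{S}_E(X, F)$.

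The hard part is the case $p=1$, where Proposition~\ref{perspective} fails: sequences in $\ell_1(\kappa)$ equivalent to the $\ell_1(\lambda)$-basis need not have disjoint supports. I would replace the first step by the transfinite Rosenthal construction underlying Lemma~\ref{rosl1}(ii), producing pairwise disjoint countable blocks $F_\alpha\subseteq\kappa$ with $\|P_{F_\alpha}Ty_\alpha\|\geqslant\delta/2$ together with summably small cross-terms $\|P_{F_\beta}Ty_\alpha\|$ for $\alpha\neq\beta$. Pigeonholing as before gives a $\lambda$-sized $\Lambda'$ on which $\|P_{F_\alpha}T_1 y_\alpha\|\geqslant\delta/4$, and the cross-term control then yields pairwise separation $\|T_1 y_\alpha - T_1 y_\beta\| \geqslant \delta/8$ for distinct $\alpha,\beta\in\Lambda'$; Lemma~\ref{rosl1}(ii) applied to $T_1$ with $A = \overline{\mathrm{span}}\{y_\alpha:\alpha\in\Lambda'\}$ then supplies a subspace of $A$ isomorphic to $\ell_1(\lambda)$ on which $T_1$ is bounded below---the desired contradiction. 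The technical delicacy is running the Rosenthal disjointification jointly for $T_1$ and $T_2$ so that the cross-term estimates survive the pigeonhole split between the two summands.
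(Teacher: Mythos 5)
Your argument for the pairs $(c_0(\lambda),\ell_\infty^c(\kappa))$ and $(\ell_p(\lambda),\ell_p(\kappa))$ with $p\in(1,\infty)$ is correct, and it is essentially the pattern the paper uses to prove Proposition~\ref{operatorideal}: pigeonhole the inequality $\|T_1y_\alpha\|+\|T_2y_\alpha\|\geqslant\delta$ over a $\lambda$-sized index set and then invoke Lemma~\ref{rosl1}(iii), respectively Corollary~\ref{lemma21lp}, for the summand that survives. (The preliminary disjointification of $(Ty_\alpha)$ via Proposition~\ref{perspective} is harmless but not actually needed for this step.) The paper's own proof of Proposition~\ref{singular} for these cases runs slightly differently --- it extracts, inside the given copy of $c_0(\lambda)$, nested $c_0(\lambda)$-subspaces on which first $T_1$ and then $T_2$ have norm at most $\varepsilon/3$, using the maximal-disjoint-family argument of Proposition~\ref{proposition2NEW} --- but the two routes rest on the same lemmas and both are fine.

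The gap is in your $p=1$ case, exactly at the point you flag. After the Rosenthal disjointification you have disjoint blocks $F_\alpha$ with $\|P_{F_\alpha}Ty_\alpha\|\geqslant\delta/2$ and small cross-terms $\|P_{F_\beta}Ty_\alpha\|$ \emph{for the sum} $T=T_1+T_2$; after pigeonholing you know $\|P_{F_\alpha}T_1y_\alpha\|\geqslant\delta/4$ on a $\lambda$-sized set, but the cross-terms of $T_1$ alone are not controlled --- $T_1y_\beta$ and $T_2y_\beta$ may each put substantial mass on $F_\alpha$ that cancels in the sum --- so the claimed separation $\|T_1y_\alpha-T_1y_\beta\|\geqslant\delta/8$ does not follow from what precedes it, and Lemma~\ref{rosl1}(ii) cannot yet be applied. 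The paper sidesteps this by citing Rosenthal's Theorem 3.3 to get the disjoint blocks for $T_1+T_2$, pigeonholing, and then applying Propositions 3.1 and 3.2 of \cite{ros1} to the single operator $T_1$ \emph{after} the pigeonhole; those results take the family with $\|(T_1x_\alpha)|_{E_\alpha}\|\geqslant\delta/2$ and produce the required $\lambda$-sized subfamily on whose closed span $T_1$ is bounded below, performing the cross-term refinement internally. Alternatively, you can close the gap by hand: since the $F_\gamma$ are disjoint, $\sum_\gamma\|P_{F_\gamma}T_1y_\beta\|\leqslant\|T_1y_\beta\|$, so for each fixed $\beta$ only boundedly many $\gamma$ satisfy $\|P_{F_\gamma}T_1y_\beta\|\geqslant\delta/16$, and a free-set (set-mapping) argument then yields a $\lambda$-sized subfamily on which all cross-terms of $T_1$ are small; with that in hand your separation estimate and Lemma~\ref{rosl1}(ii) do go through.
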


\begin{proof}The case $\lambda=\omega$ follows from \cite[Proposition 2.5]{kanlaus}. Let us consider the case where $\lambda$ is uncountable.

Suppose that $T,S$ are in $\mathscr{S}_{c_0(\lambda)}\big(X, \ell_\infty^c(\kappa)\big)$ and assume, in search of contradiction, that $T+S$ is not in $\mathscr{S}_{c_0(\lambda)}\big(X, \ell_\infty^c(\kappa)\big)$. Then there exists a subspace $X_1\subseteq X$ isomorphic to $c_0(\lambda)$ and $\varepsilon > 0$ such that $\varepsilon\|z\|\leqslant \|(T+S)(z)\|$ for all $z\in X_1$. Arguing similarly as in the proof of Proposition~\ref{proposition2NEW}, we infer that there exists a subspace $Z_1\subset X_1$ isomorphic to $c_0(\lambda)$ such that $\|T|_{Z_1}\|\leqslant \tfrac{\varepsilon}{3}$. 
Repeating the above argument, we conclude that there is a subspace $Z_2\subseteq Z_1$ isomorphic to $c_0(\lambda)$ such that $\|S|_{Z_2}\|\leqslant \tfrac{\varepsilon}{3}$. Consequently, for all $z\in Z_2$ we have
\[\varepsilon\|z\|\leqslant \|Tz + Sz\| \leqslant \frac{\varepsilon}{3}\|z\|+\frac{\varepsilon}{3}\|z\| = \frac{2}{3}\varepsilon\|z\|; \]
a contradiction.\smallskip

The proof in the case of $\ell_p(\lambda)$ for $p\in (1,\infty)$ is analogous (here we use Corollary~\ref{lemma21lp} instead of Lemma~\ref{rosl1}(iii) when arguing as in the proof of Proposition~\ref{proposition2NEW}).\smallskip

The proof of the case of $p=1$ is actually contained in \cite{ros1}. Indeed, suppose contrapositively that $T,S\colon X\to \ell_1(\kappa)$ are operators such that $T+S$ is not in $\mathscr{S}_{\ell_1(\lambda)}\big(X, \ell_1(\kappa)\big)$. 
Let $Y$ be a copy of $\ell_1(\lambda)$ in $X$ such that $(T+S)_{|Y}$ is an isomorphism. By \cite[Theorem 3.3]{ros1}, there is a set of unit vectors $\{x_\alpha\colon \alpha<\lambda\}$ in $Y$ and disjoint sets $E_\alpha\subset \kappa$ ($\alpha<\lambda$) such that for some $\delta>0$ $\|((T+S)x_\alpha)|_{E_\alpha}\|\geqslant \delta$. It follows that for some subset $\Lambda\subseteq \lambda$ of cardinality $\lambda$ and for either $T$ or $S$, say $T$, $\|(Tx_\alpha)|_{E_\alpha}\|\geqslant \delta/2$ for all $\alpha\in \Lambda$. It now follows from \cite[Propositions 3.2 and 3.1]{ros1} that $\Lambda$ contains a subset $\Lambda^\prime$ of the same cardinality such that $T$ is bounded below on the closed linear span of $\{x_\alpha\colon \alpha\in \Lambda^\prime\}$.
\end{proof}

\begin{remark*}
One can improve a bit the $\ell_1$-version of the previous proposition: If $X$ contains an isomorphic copy of $\ell_1(\lambda)$ then for each $\varepsilon>0$ it contains a $(1+\varepsilon)$-isomorphic copy of $\ell_1(\lambda)$ that is complemented by means of a projection of norm at most $1+\varepsilon$.

This follows from a simple combination of the non-separable counter-part of the James distortion theorem for $\ell_1$ (\cite{james}, \emph{cf.} \cite{giesy}), which provides the $(1+\varepsilon)$-isomorphic copy of $\ell_1(\lambda)$ together with Dor's result \cite[Theorem A]{dor}, which provides the needed projection.

This remark is provided here because it may be useful in questions related to the classification of commutators. Similar facts were useful when classifying the commutators in, say, the algebra of bounded operators on $\ell_1$.
\end{remark*}

Proposition~\ref{singular} can be strengthened in the case of $c_0(\lambda)$:

\begin{proposition}\label{operatorideal}For each infinite cardinal number $\lambda$, $\mathscr{S}_{c_0(\lambda)}$ forms a closed operator ideal. \end{proposition}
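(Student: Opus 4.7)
The plan is to verify directly the four defining conditions of a closed operator ideal for the family $\{\mathscr{S}_{c_0(\lambda)}(X,Y)\}_{X,Y}$: namely, that each $\mathscr{S}_{c_0(\lambda)}(X,Y)$ contains the finite-rank operators, is closed under composition with bounded operators on either side, is closed under addition, and is norm-closed. As pointed out in Section 2, $\mathscr{S}_E$ is not closed under addition for a general Banach space $E$, so the real content of the proposition lies in the additive part; everything else is formal.

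For the easy axioms: a finite-rank operator belongs to $\mathscr{S}_{c_0(\lambda)}(X,Y)$ because it cannot be bounded below on any infinite-dimensional subspace, and hence not on a copy of $c_0(\lambda)$. For the two-sided ideal property, suppose $T\in \mathscr{S}_{c_0(\lambda)}(X,Y)$, $A\in\mathscr{B}(Z,X)$, $B\in \mathscr{B}(Y,W)$. If $BTA$ were bounded below on some subspace $Z_1 \subseteq Z$ isomorphic to $c_0(\lambda)$, then $A|_{Z_1}$ would necessarily be an isomorphism onto its range, and $T$ would be bounded below on the copy $A[Z_1]$ of $c_0(\lambda)$ in $X$, contradicting $T\in \mathscr{S}_{c_0(\lambda)}$. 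Norm-closure follows from an $\varepsilon/2$ argument: a uniform lower bound on $T$ restricted to $X_1\cong c_0(\lambda)$ passes, with a loss of factor two, to any $T_n$ with $\|T_n-T\|$ small, contradicting $T_n\in \mathscr{S}_{c_0(\lambda)}(X,Y)$.

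The delicate part---and the only place where the special structure of $c_0(\lambda)$ is really needed---is closure under addition. Suppose $T, S \in \mathscr{S}_{c_0(\lambda)}(X,Y)$ and, for contradiction, that $T+S$ is bounded below by some $\varepsilon>0$ on a subspace $X_1\subseteq X$ realised by an isomorphism $J\colon c_0(\lambda)\to X_1$. Let $(f_\alpha)_{\alpha<\lambda}$ denote the canonical basis of $c_0(\lambda)$ and set $e_\alpha:= Jf_\alpha$. Then $\|(T+S)e_\alpha\| \geqslant \varepsilon/\|J^{-1}\|$ uniformly in $\alpha$. By a pigeonhole argument based on the infinitude of $\lambda$, there exists $\Lambda\subseteq \lambda$ with $|\Lambda|=\lambda$ such that, without loss of generality, $\|Te_\alpha\| \geqslant \varepsilon/(2\|J^{-1}\|)$ for every $\alpha\in \Lambda$. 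Identifying $c_0(\Lambda)$ with $c_0(\lambda)$ and applying Lemma~\ref{rosl1}(iii) to $TJ|_{c_0(\Lambda)}\colon c_0(\Lambda)\to Y$, we obtain $\Lambda'\subseteq \Lambda$ of cardinality $\lambda$ on which $TJ$ is bounded below. Equivalently, $T$ is bounded below on $J[c_0(\Lambda')]$, a subspace of $X$ isomorphic to $c_0(\lambda)$, contradicting $T\in \mathscr{S}_{c_0(\lambda)}(X,Y)$. The key obstacle is concentrated here: Lemma~\ref{rosl1}(iii) provides precisely the stability of $c_0(\lambda)$ under passing to subbases of the same cardinality that is absent for generic Banach spaces $E$, and its application promotes a uniform lower bound on basis vectors to a lower bound on an entire copy of $c_0(\lambda)$.
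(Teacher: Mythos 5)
Your proof is correct and follows essentially the same route as the paper: the crucial additive step is handled identically, by a pigeonhole argument selecting a subset $\Lambda$ of cardinality $\lambda$ on which one of $\|Te_\alpha\|,\|Se_\alpha\|$ is uniformly bounded away from zero, followed by an application of Lemma~\ref{rosl1}(iii). Your treatment is in fact slightly more explicit than the paper's in routing the lemma through the isomorphism $J\colon c_0(\lambda)\to X_1$ and in spelling out the remaining operator-ideal axioms, which the paper dismisses as obvious.
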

\begin{proof}Let $X$ and $Y$ be Banach spaces. The set $\mathscr{S}_{c_0(\lambda)}(X,Y)$ is closed in $\mathscr{B}(X,Y)$ and obviously is closed under taking compositions with other operators (whenever these make sense). The only non-trivial thing is to verify that it is indeed closed under addition.

Let $T,S\in \mathscr{B}(X,Y)$ be operators such that $T+S$ is bounded below by $\delta>0$ on some subspace $X_0\subseteq X$ isomorphic to $c_0(\lambda)$. Let $\{e_\alpha\colon \alpha<\lambda\}$ be a transfinite sequence in $X_0$ equivalent to the canonical basis of $c_0(\lambda)$. Since
\[\delta \leqslant \|Te_\alpha + Se_\alpha\|\]
for all $\alpha<\lambda$, there is a set $\Lambda$ of cardinality $\lambda$ such that $\|Te_\alpha\|\geqslant \frac{\delta}{2}$ or $\|Se_\alpha\|\geqslant \frac{\delta}{2}$ for all $\alpha\in \Lambda$. It follows then from Lemma~\ref{rosl1}(iii) that at least one of the operators $T$ or $S$ does not belong to $\mathscr{S}_{c_0(\lambda)}(X,Y)$.\end{proof}

\begin{theorem}\label{trueideals}Let $\kappa$ and $\lambda$ be infinite cardinals and let $p\in [1,\infty]$. Then $\mathscr{A}$ is a closed ideal of $\mathscr{B}$ if
\begin{itemize}
\item[] $\mathscr{A} = \mathscr{S}_{c_0(\kappa)}(c_0(\lambda))$ and $\mathscr{B} = \mathscr{B}(c_0(\lambda))$;
\item[] $\mathscr{A} = \mathscr{S}_{\ell_p(\kappa)}(\ell_p(\lambda))$ and $\mathscr{B} = \mathscr{B}(\ell_p(\lambda))$;
\item[] $\mathscr{A} = \mathscr{S}_{\ell_\infty^c(\kappa)}(\ell_\infty^c(\lambda))$ and $\mathscr{B} = \mathscr{B}(\ell_\infty^c(\lambda))$.
\end{itemize}
Moreover,
\begin{equation}\label{moreover}\mathscr{S}_{\ell_\infty^c(\kappa)}(\ell_\infty^c(\lambda)) =  \mathscr{S}_{c_0(\kappa)}(\ell_\infty^c(\lambda)).\end{equation}\end{theorem}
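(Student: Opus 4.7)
My plan is to verify, for each of the three cases listed, that $\mathscr{A}$ is norm-closed, closed under left and right composition by elements of $\mathscr{B}$, and closed under addition. Norm-closure is a routine $\varepsilon/2$ argument: if $T_n\to T$ in norm and $T$ were bounded below by $c$ on some copy $Y$ of $E$, then all sufficiently large $T_n$ would be bounded below by $c/2$ on $Y$. For left composition, $\|ATy\|\leqslant\|A\|\|Ty\|$ forces $T$ to be bounded below on $Y$ whenever $AT$ is; for right composition, $\|TAy\|\leqslant\|T\|\|Ay\|$ forces $A|_Y$ to be bounded below, hence $A(Y)$ is another copy of $E$ on which $T$ is bounded below. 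So the substance of the proof is closure under addition.

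For the first two cases, closure under addition is already available. Proposition~\ref{operatorideal}, read with $\kappa$ in place of $\lambda$, shows that $\mathscr{S}_{c_0(\kappa)}$ is a closed operator ideal, which handles $\mathscr{A}=\mathscr{S}_{c_0(\kappa)}(c_0(\lambda))$. For $p\in[1,\infty)$, closure under addition of $\mathscr{S}_{\ell_p(\kappa)}(\ell_p(\lambda))$ is precisely the content of Proposition~\ref{singular} applied with $X=\ell_p(\lambda)$, with the cardinals relabelled appropriately.

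For the third case the strategy is to first prove the moreover equation \eqref{moreover}; closure under addition then descends from the $c_0(\kappa)$-clause of Proposition~\ref{singular} with $X=\ell_\infty^c(\lambda)$. One direction, $\mathscr{S}_{c_0(\kappa)}(\ell_\infty^c(\lambda))\subseteq\mathscr{S}_{\ell_\infty^c(\kappa)}(\ell_\infty^c(\lambda))$, is immediate: every isomorphic copy of $\ell_\infty^c(\kappa)$ contains a copy of $c_0(\kappa)$ as the image of its canonical subspace of finitely supported sequences, so an operator bounded below on the former is bounded below on the latter. For the reverse inclusion, assume $T\in\mathscr{B}(\ell_\infty^c(\lambda))$ is bounded below on a copy of $c_0(\kappa)$. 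If both $\kappa$ and $\lambda$ are uncountable, Proposition~\ref{proposition3} directly delivers a subspace isometric to $\ell_\infty^c(\kappa)$ on which $T$ is bounded below. If $\kappa=\omega$, the given copy of $c_0$ sits inside $\ell_\infty^c(\Lambda)\cong\ell_\infty$ for some countable $\Lambda\subset\lambda$, and we apply Lemma~\ref{rosl1}(i) to $T|_{\ell_\infty^c(\Lambda)}$, an operator from the injective space $\ell_\infty$, to produce a copy of $\ell_\infty=\ell_\infty^c(\omega)$ on which $T$ is bounded below. The remaining sub-case, $\kappa$ uncountable and $\lambda$ countable, is vacuous by Proposition~\ref{intoellinfty}: $c_0(\kappa)$ does not embed into $\ell_\infty$, so the hypothesis is empty.

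The only real subtlety I anticipate is the case-bookkeeping for the moreover inclusion, since Proposition~\ref{proposition3} requires both cardinals to be uncountable and the small cardinal cases must be patched separately by Lemma~\ref{rosl1}(i) and Proposition~\ref{intoellinfty}. Everything else reduces transparently to the already-established Propositions~\ref{operatorideal} and~\ref{singular}.
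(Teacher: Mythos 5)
Your treatment of the first clause, of the second clause for $p\in[1,\infty)$, and of the third clause together with the ``moreover'' identity \eqref{moreover} is correct and follows essentially the same route as the paper: reduce everything to closure under addition, quote Propositions~\ref{operatorideal} and~\ref{singular} for the $c_0$ and $\ell_p$ ($p<\infty$) cases, and for $\ell_\infty^c(\lambda)$ first establish \eqref{moreover} (via Proposition~\ref{proposition3} when $\kappa$ and $\lambda$ are uncountable, via Lemma~\ref{rosl1}(i) when $\kappa=\omega$, and vacuously when $\kappa$ is uncountable and $\lambda=\omega$) and then transport addition-closure across that identity. Your explicit case-bookkeeping for \eqref{moreover} is in fact slightly more careful than the paper's, which does not spell out the $\lambda=\omega$ sub-case.

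However, there is a genuine omission: the theorem's second clause is stated for $p\in[1,\infty]$, and the instance $p=\infty$ concerns $\mathscr{A}=\mathscr{S}_{\ell_\infty(\kappa)}(\ell_\infty(\lambda))$ inside $\mathscr{B}(\ell_\infty(\lambda))$ --- the \emph{full} space $\ell_\infty(\lambda)$, which for uncountable $\lambda$ is not the same as $\ell_\infty^c(\lambda)$ and is not covered by your third case. Your proposal never addresses it: Proposition~\ref{singular} is only stated for $p\in[1,\infty)$ and for targets of the form $\ell_\infty^c(\kappa)$, and Proposition~\ref{proposition3} is likewise a statement about $\ell_\infty^c$. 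The missing case is not hard with the tools you already invoke, and the paper disposes of it as follows: if $T+S$ is bounded below on a copy of $\ell_\infty(\kappa)$, then in particular it is bounded below on a copy of $c_0(\kappa)$, so by Proposition~\ref{operatorideal} one of $T$, $S$ is bounded below on a copy of $c_0(\kappa)$; since the domain $\ell_\infty(\lambda)$ is injective, Lemma~\ref{rosl1}(i) upgrades this to being bounded below on a copy of $\ell_\infty(\kappa)$, so that operator is not in $\mathscr{S}_{\ell_\infty(\kappa)}(\ell_\infty(\lambda))$. You should add this paragraph (or some equivalent argument) to close the gap.
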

\begin{proof}The first clause as well as the second one in the case of $p\in [1,\infty)$ follow directly from Proposition~\ref{singular}.

We claim that $\mathscr{S}_{\ell_\infty^c(\kappa)}(\ell_\infty^c(\lambda)) =  \mathscr{S}_{c_0(\kappa)}(\ell_\infty^c(\lambda))$, which is plainly enough as Proposition~\ref{singular} asserts that $\mathscr{S}_{c_0(\kappa)}(\ell_\infty^c(\lambda))$ is a closed ideal of $\mathscr{B}(\ell_\infty^c(\lambda))$. Let us suppose that $T\notin  \mathscr{S}_{c_0(\kappa)}(\ell_\infty^c(\lambda))$. If $\kappa$ is countable, then arguing as in the proof of Proposition~\ref{grothendieck}, we infer that $T$ preserves a copy of $\ell_\infty$. In the case where $\kappa$ is uncountable, Proposition~\ref{perspective} gives us a transfinite sequence of disjointly supported unit vectors $(x_\beta)_{\beta<\kappa}$ such that $T$ is bounded below on $\overline{\rm span}\{x_\beta\colon \beta<\kappa\}$. By Proposition~\ref{proposition3}, $T$ is bounded below on some copy of $\ell_\infty^c(\kappa)$, hence $T\notin \mathscr{S}_{\ell_\infty^c(\kappa)}(\ell_\infty^c(\lambda))$. On the other hand, $\mathscr{S}_{\ell_\infty^c(\kappa)}(\ell_\infty^c(\lambda))\supseteq  \mathscr{S}_{c_0(\kappa)}(\ell_\infty^c(\lambda))$ trivially, so the proof of the claim is complete.

Let us consider now the remaining case of $\mathscr{A} = \mathscr{S}_{\ell_\infty(\kappa)}(\ell_\infty(\lambda))$ and $\mathscr{B} = \mathscr{B}(\ell_\infty(\lambda))$. Suppose contrapositively that $T,S$ are operators on $\ell_\infty(\lambda)$ such that $T+S\notin \mathscr{S}_{\ell_\infty(\kappa)}(\ell_\infty(\lambda))$. Let $X\subseteq \ell_\infty(\lambda)$ be a copy of $c_0(\kappa)$ on which $T+S$ is bounded below. Since $\mathscr{S}_{c_0(\lambda)}$ is an operator ideal (Proposition~\ref{operatorideal}), either $T$ or $S$ is bounded below on some copy of $c_0(\lambda)$. By Lemma~\ref{rosl1}(i), at least one of those operators is bounded below on a copy of $\ell_\infty(\lambda)$, which means that either $T$ or $S$ is not in $\mathscr{S}_{\ell_\infty(\kappa)}(\ell_\infty(\lambda))$.\end{proof}

The next lemma is a counter-part of \cite[Proposition 5.1]{daws}.
\begin{lemma}\label{lemmadaws}Let $\kappa$ and $\lambda$ be infinite cardinal numbers and suppose that $\kappa$ is uncountable and is not a successor of any cardinal number. If $E_\lambda$ is one of the spaces $c_0(\lambda)$, $\ell_\infty^c(\lambda)$ or $\ell_p(\lambda)$ for some $p\in [1,\infty)$, then
$$\overline{\bigcup_{\rho<\kappa} \mathscr{S}_{E_\rho}(E_\lambda)} = \mathscr{S}_{E_\kappa}(E_\lambda).$$
Consequently, if the cofinality of $\kappa$ is uncountable $$\bigcup_{\rho<\kappa} \mathscr{S}_{E_\rho}(E_\lambda) = \mathscr{S}_{E_\kappa}(E_\lambda).$$ \end{lemma}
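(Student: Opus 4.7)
The plan is to prove, by a maximality/support-reduction argument in the spirit of Proposition~\ref{proposition2NEW}, the closure identity, and then to promote it under the additional hypothesis of uncountable cofinality. The easy inclusion $\overline{\bigcup_{\rho<\kappa}\mathscr{S}_{E_\rho}(E_\lambda)}\subseteq\mathscr{S}_{E_\kappa}(E_\lambda)$ is immediate: for each $\rho<\kappa$ the space $E_\kappa$ contains a coordinate subspace isometric to $E_\rho$, hence restricting any copy of $E_\kappa$ on which $T$ is bounded below yields a copy of $E_\rho$ on which $T$ is bounded below, giving $\mathscr{S}_{E_\rho}(E_\lambda)\subseteq\mathscr{S}_{E_\kappa}(E_\lambda)$; and Theorem~\ref{trueideals} guarantees that $\mathscr{S}_{E_\kappa}(E_\lambda)$ is already closed.

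For the reverse inclusion, fix $T\in\mathscr{S}_{E_\kappa}(E_\lambda)$ and $\varepsilon>0$, and select a maximal collection $(f_\alpha)_{\alpha\in\Gamma}$ of disjointly supported unit vectors of $E_\lambda$ satisfying $\|Tf_\alpha\|\geqslant\varepsilon$. The first key step is to prove $|\Gamma|<\kappa$. If instead $|\Gamma|\geqslant\kappa$, then the closed linear span of any $\kappa$ of the $f_\alpha$ is isometric to $E_\kappa$ (a sublattice isometric to $\ell_\infty^c(\kappa)$ containing $c_0(\kappa)$ in the $\ell_\infty^c$ case), and one can extract a subfamily of cardinality $\kappa$ on which $T$ is bounded below, contradicting $T\in\mathscr{S}_{E_\kappa}(E_\lambda)$. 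This extraction uses Lemma~\ref{rosl1}(iii) for $c_0(\lambda)$; for $\ell_\infty^c(\lambda)$ the identity $\mathscr{S}_{\ell_\infty^c(\kappa)}(\ell_\infty^c(\lambda))=\mathscr{S}_{c_0(\kappa)}(\ell_\infty^c(\lambda))$ from Theorem~\ref{trueideals} reduces matters to Lemma~\ref{rosl1}(iii); for $\ell_p(\lambda)$ with $1<p<\infty$ it uses Corollary~\ref{lemma21lp}; and for $\ell_1(\lambda)$ it follows the $\ell_1$-subcase of the argument in Proposition~\ref{singular} (which rests on \cite[Theorem 3.3]{ros1}). Now set $\Lambda=\bigcup_\alpha\mathrm{supp}\,f_\alpha$; since each support is countable and $\kappa$ is uncountable, $|\Lambda|<\kappa$. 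Maximality of $(f_\alpha)_{\alpha\in\Gamma}$ gives $\|Tf\|<\varepsilon$ for every unit vector $f$ supported in $\lambda\setminus\Lambda$, so $\|T-TR_\Lambda\|=\|TR_{\lambda\setminus\Lambda}\|\leqslant\varepsilon$.

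Because $\kappa$ is a limit cardinal, one can select a cardinal $\rho$ with $|\Lambda|<\rho<\kappa$, say $\rho=|\Lambda|^{+}$. I then claim $TR_\Lambda\in\mathscr{S}_{E_\rho}(E_\lambda)$: indeed, if $TR_\Lambda$ were bounded below on a subspace $W$ isomorphic to $E_\rho$, then $R_\Lambda|_W$ would also be bounded below, embedding $E_\rho$ into $E_\Lambda$; but $E_\Lambda$ has density $|\Lambda|<\rho$, which directly forbids such an embedding in the $c_0$ and $\ell_p$ cases, while in the $\ell_\infty^c$ case $E_\rho\supseteq c_0(\rho)$ cannot embed into $\ell_\infty^c(\Lambda)$ by Proposition~\ref{perspective}(ii). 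This produces the required $\varepsilon$-approximation and establishes the closure identity.

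For the strengthening under $\mathrm{cf}(\kappa)>\omega$, iterate the construction with $\varepsilon_n=1/n$ to obtain sets $\Lambda_n\subset\lambda$ with $|\Lambda_n|<\kappa$ and $\|TR_{\lambda\setminus\Lambda_n}\|\leqslant 1/n$, and set $\Lambda=\bigcup_n\Lambda_n$. Uncountable cofinality forces $\sup_n|\Lambda_n|<\kappa$, whence $|\Lambda|<\kappa$. Since $\Lambda\supseteq\Lambda_n$ for every $n$, one has $\|TR_{\lambda\setminus\Lambda}\|\leqslant 1/n$ for all $n$, so $T=TR_\Lambda$, and the density argument above places $T$ in $\mathscr{S}_{E_\rho}(E_\lambda)$ for any cardinal $\rho$ with $|\Lambda|<\rho<\kappa$. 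The main technical hurdle I expect is the uniformity of the extraction step giving $|\Gamma|<\kappa$: the four families $c_0(\lambda)$, $\ell_p(\lambda)$ with $p>1$, $\ell_1(\lambda)$, and $\ell_\infty^c(\lambda)$ each require a slightly different tool, and for $\ell_1$ in particular one must produce sufficient separation of the $Tf_\alpha$ to feed Lemma~\ref{rosl1}(ii), which is precisely what the $\ell_1$-portion of the proof of Proposition~\ref{singular} accomplishes. Once that extraction is in place, the remainder is a clean support-truncation argument.
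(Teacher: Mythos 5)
Your argument is sound for $c_0(\lambda)$, $\ell_p(\lambda)$ with $1<p<\infty$, and $\ell_\infty^c(\lambda)$, where it essentially reruns Proposition~\ref{proposition2NEW}; but it breaks down for $\ell_1(\lambda)$, and the failure is not a technicality. The whole strategy of truncating on the \emph{domain} side --- ``find $\Lambda$ with $|\Lambda|<\kappa$ and $\|TR_{\lambda\setminus\Lambda}\|\leqslant\varepsilon$'' --- is false in $\ell_1$. Take $Tf=\bigl(\sum_{\alpha<\lambda}f(\alpha)\bigr)e_0$ on $\ell_1(\lambda)$. This is a rank-one (hence compact) operator, so $T\in\mathscr{S}_{E_\omega}(E_\lambda)\subseteq\mathscr{S}_{E_\kappa}(E_\lambda)$; yet $\|Te_\alpha\|=1$ for every $\alpha$, so with $\varepsilon=1/2$ your maximal disjointly supported family has cardinality $\lambda\geqslant\kappa$ (killing your ``first key step'' $|\Gamma|<\kappa$), and $\|TR_{\lambda\setminus\Lambda}\|=1$ for every proper subset $\Lambda$ of $\lambda$ (killing the truncation itself). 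The same example shows that your extraction claim --- $\kappa$ disjointly supported unit vectors with $\|Tf_\alpha\|\geqslant\varepsilon$ force $T$ to be bounded below on a copy of $\ell_1(\kappa)$ --- is simply wrong: unlike in $c_0$ or $\ell_p$ with $p>1$, a norm lower bound on the images of disjointly supported vectors gives no separation whatsoever. Your appeal to the $\ell_1$-portion of the proof of Proposition~\ref{singular} does not repair this, because that argument \emph{starts} from an operator already known to be bounded below on a copy of $\ell_1(\lambda)$ and only then invokes \cite[Theorem 3.3]{ros1} to disjointify the images; you have no such hypothesis available.

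The paper's proof avoids this by truncating on the \emph{range} side: the claim proved there is that for each $\varepsilon>0$ there is $\Lambda$ with $|\Lambda|<\kappa$ and $\|P_{\lambda\setminus\Lambda}T\|=\|P_\Lambda T-T\|<\varepsilon$, where $P_\Lambda$ acts on the codomain. (For the rank-one example above, $\Lambda=\{0\}$ works.) If this fails, one extracts unit vectors $x_\alpha$ ($\alpha<\kappa$), not necessarily disjointly supported, together with pairwise disjoint finite sets $E_\alpha$ in the range with $\|(Tx_\alpha)|_{E_\alpha}\|\geqslant\delta$, and then Rosenthal's Propositions 3.1 and 3.2 produce a subfamily on whose closed span (isomorphic to $\ell_1(\kappa)$) $T$ is bounded below. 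The range truncation also feeds the endgame just as well as yours: $P_\Lambda T$ takes values in $E_\Lambda$, whose density is too small to contain a copy of $E_\rho$ for $\rho=|\Lambda|^+<\kappa$. Your treatment of the other three spaces and of the uncountable-cofinality consequence is fine as far as it goes, but as written the lemma is not proved for $p=1$; you should either switch to range-side truncation uniformly or at least replace the $\ell_1$ branch by the Rosenthal argument.
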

\begin{proof}Only the inclusion from right to left requires justification. Certainly we may suppose that $\kappa$ is uncountable as otherwise the left hand side is trivially equal to the right hand side.\smallskip

Take $T\in  \mathscr{S}_{E_\kappa}(E_\lambda)$. We claim that for each $\varepsilon>0$ there is a set $\Lambda$ with $|\Lambda| < \kappa$ such that $\|P_\Lambda T - T\| < \varepsilon$. We split the proof of this claim into two subcases.\smallskip
\begin{itemize}
\item Let us consider first the case where $E_\lambda= \ell_1(\lambda)$. Assume that the assertion fails for a certain $\varepsilon > 0$. Then there is a set of unit vectors $\{x_\alpha\colon \alpha<\kappa\}$ in $Y$ and finite disjoint sets $E_\alpha\subset \lambda$ ($\alpha<\kappa$) such that for some $\delta\in (0, \varepsilon)$ we have $\|Tx_\alpha|_{E_\alpha}\|\geqslant \delta$ ($\alpha<\kappa$). It now follows from \cite[Propositions 3.2 and 3.1]{ros1} that $\kappa$ contains a subset of the same cardinality, ${\rm K}$ say, such that $T$ is bounded below on $\overline{{\rm span}}\, \{x_\alpha\colon \alpha\in {\rm K}\}\cong \ell_1(\kappa)$.\smallskip

\item Assume that our assertion is not true in the remaining cases, which means that for some $\varepsilon_0>0$ and all sets $\Lambda$ with cardinality less than $\kappa$ we have $$\|P_{\lambda \setminus \Lambda}T\| = \|P_\Lambda T - T\| \geqslant \varepsilon_0.$$ If $E_\lambda = c_0(\lambda)$ or $E_\lambda = \ell_p(\lambda)$ for some $p\in (1,\infty)$, this is a contradiction. Indeed, otherwise by Proposition~\ref{perspective}(i), we would have a family of disjointly supported unit vectors $\{z_\gamma\colon \gamma<\kappa\}$ such that $\{Tz_\gamma\colon \gamma<\kappa\}$ are also disjointly supported and have norm at least $\varepsilon_0/2$. In particular, $T$ would be bounded below on a copy of $E_\kappa$ against the assumption. For $E_\lambda = \ell_\infty^c(\lambda)$, as in the case of $c_0(\lambda)$, there would exist a family of disjointly supported unit vectors $\{z_\gamma\colon \gamma<\kappa\}$ such that $\{Tz_\gamma\colon \gamma<\kappa\}$ are also disjointly supported and have norm at least $\varepsilon_0/2$. Thus $T$ would preserve a copy of $c_0(\lambda)$, hence by \eqref{moreover}, also a copy of $\ell_\infty^c(\kappa)$. \end{itemize}\smallskip

Since $P_{\Lambda} T\in \mathscr{S}_{E_\rho}(E_\lambda)$ ($n\in \mathbb{N}$), we conclude that $T$ belongs to the closure of the ideal $\bigcup_{\rho<\kappa} \mathscr{S}_{E_\rho}(E_\lambda)$ as desired. \smallskip

The final assertion follows from the well-known fact that in first-countable spaces increasing unions of  closed sets that are well-ordered by inclusion are closed when $\kappa $ has uncountable cofinality.\end{proof}

\begin{lemma}\label{lemmaW}For each infinite cardinal number $\lambda$ we have $$\mathscr{W}(\ell_\infty^c(\lambda)) = \mathscr{S}_{\ell_\infty}(\ell_\infty^c(\lambda)),$$ where $\mathscr{W}$ denotes the ideal of weakly compact operators.  \end{lemma}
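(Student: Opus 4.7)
The plan is to prove the two inclusions $\mathscr{W}(\ell_\infty^c(\lambda)) \subseteq \mathscr{S}_{\ell_\infty}(\ell_\infty^c(\lambda))$ and $\mathscr{S}_{\ell_\infty}(\ell_\infty^c(\lambda)) \subseteq \mathscr{W}(\ell_\infty^c(\lambda))$ separately. For the first, if $T$ were weakly compact and bounded below on a subspace $Y$ isomorphic to $\ell_\infty$, then the restriction $T|_Y$ would be a weakly compact isomorphism of $Y$ onto $T(Y)\cong\ell_\infty$. Boundedness below yields $(T|_Y)^{-1}$ bounded, so the unit ball of $T(Y)$ is contained in a scalar multiple of $T(B_Y)$, which is relatively weakly compact. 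Thus $T(Y)\cong\ell_\infty$ would be reflexive, a contradiction.

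For the reverse inclusion, suppose $T\in\mathscr{B}(\ell_\infty^c(\lambda))$ is not weakly compact. The same application of Pe\l czy\'{n}ski's theorem carried out in the proof of Proposition~\ref{grothendieck} produces a subspace $Y\subseteq\ell_\infty^c(\lambda)$ isomorphic to $c_0$ on which $T$ is bounded below. Since $Y$ is separable and every vector in $\ell_\infty^c(\lambda)$ has countable support, there is a countable set $\Lambda\subseteq\lambda$ with $Y\subseteq E_\Lambda$. The space $E_\Lambda$ is isometric to $\ell_\infty$, hence injective; so Lemma~\ref{rosl1}(i), applied to $T|_{E_\Lambda}\colon E_\Lambda\to\ell_\infty^c(\lambda)$ (which is bounded below on the copy $Y$ of $c_0=c_0(\omega)$), delivers a subspace of $E_\Lambda$ isomorphic to $\ell_\infty(\omega)=\ell_\infty$ on which $T$ is bounded below. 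Hence $T\notin\mathscr{S}_{\ell_\infty}(\ell_\infty^c(\lambda))$.

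The only somewhat delicate point is the invocation of Pe\l czy\'{n}ski's theorem for an operator out of $\ell_\infty^c(\lambda)$, which is not literally a $C(K)$-space when $\lambda$ is uncountable. This is exactly the step taken by the authors in the proof of Proposition~\ref{grothendieck} and is therefore available; alternatively, one may extend $T$ to the genuine $C(K)$-space $E=\mbox{span}\{\mathbf{1}_\lambda,\ell_\infty^c(\lambda)\}$ (for instance by sending $\mathbf{1}_\lambda$ to $0$), apply the theorem in $E$, and push the resulting $c_0$-copy back into $\ell_\infty^c(\lambda)$ via the canonical bounded projection $E\to\ell_\infty^c(\lambda)$.
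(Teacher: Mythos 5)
Your proof is correct and follows essentially the same route as the paper: the substantive inclusion is obtained, exactly as in the paper, by combining Pe\l czy\'{n}ski's theorem with the observation that a separable subspace of $\ell_\infty^c(\lambda)$ sits inside some $E_\Lambda\cong\ell_\infty$ and then invoking Lemma~\ref{rosl1}(i). Your explicit reflexivity argument for the easy inclusion and your justification for applying Pe\l czy\'{n}ski's theorem to $\ell_\infty^c(\lambda)$ (via the $C(K)$-space $E=\mbox{span}\{\mathbf{1}_\lambda,\ell_\infty^c(\lambda)\}$) are sound additions that the paper leaves implicit.
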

\begin{proof}Let $T$ be an operator on $\ell_\infty^c(\lambda)$. Then by \cite{pelczynski}, $T$ is not weakly compact if and only if $T$ is bounded below on some copy of $c_0$. Suppose that $T$ is not weakly compact and let $F$ be a copy of $c_0$ that is preserved by $T$. Then there exists a countable set $\Lambda$ such that $F\subseteq E_\Lambda$. Since $E_\Lambda$ is isomorphic to $\ell_\infty$, by Lemma~\ref{rosl1}(i), $T$ preserves a copy of $\ell_\infty$. \end{proof}\smallskip

\begin{lemma}\label{lemmaideal}Let $\kappa$ and $\lambda$ be infinite cardinal numbers and let $T\in \mathscr{B}(\ell_\infty^c(\lambda))$. Suppose that $T\notin \mathscr{S}_{\ell_\infty^c(\kappa)}(\ell_\infty^c(\lambda))$. Then $\mathscr{S}_{\ell_\infty^c(\kappa^+)}(\ell_\infty^c(\lambda))$ is contained in the ideal generated by $T$.\end{lemma}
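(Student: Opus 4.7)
My plan is to use $T$ to manufacture a projection onto a suitably nice copy of $\ell_\infty^c(\kappa)$ that sits inside the (closed) ideal generated by $T$, and then to show that every $S\in\mathscr{S}_{\ell_\infty^c(\kappa^+)}(\ell_\infty^c(\lambda))$ can be approximated in norm by operators of the form $ATB$ routed through this projection.

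\emph{First step.} Because $T\notin\mathscr{S}_{\ell_\infty^c(\kappa)}(\ell_\infty^c(\lambda))$, the operator $T$ is bounded below on some copy of $\ell_\infty^c(\kappa)$ in $\ell_\infty^c(\lambda)$. Applying Proposition~\ref{stronger} I may replace it by a complemented copy $Y\subseteq\ell_\infty^c(\lambda)$ on which $T$ is still bounded below; then $TY\cong\ell_\infty^c(\kappa)$, and a second use of Proposition~\ref{stronger} yields a subspace $Z\subseteq TY$ isomorphic to $\ell_\infty^c(\kappa)$ and complemented in $\ell_\infty^c(\lambda)$ by a projection $\rho$. Setting $Y_0:=(T|_Y)^{-1}(Z)\subseteq Y$ makes $T|_{Y_0}\colon Y_0\to Z$ an isomorphism. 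Letting $U\colon Z\to Y_0$ be its inverse and $\tilde U:=U\rho\in\mathscr{B}(\ell_\infty^c(\lambda))$, one has $T\tilde U=\rho$, so the projection $\rho$ onto the copy $Z\cong\ell_\infty^c(\kappa)$ already lies in the ideal generated by $T$.

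\emph{Second step.} Given $S\in\mathscr{S}_{\ell_\infty^c(\kappa^+)}(\ell_\infty^c(\lambda))$, the identity~\eqref{moreover} from Theorem~\ref{trueideals} ensures that $S$ is not bounded below on any copy of $c_0(\kappa^+)$, and in particular not on any sublattice isometric to $c_0(\kappa^+)$. Since $\kappa^+$ is uncountable, Proposition~\ref{proposition2NEW} produces, for each $\varepsilon>0$, a set $\Lambda\subseteq\lambda$ with $|\Lambda|\leqslant\kappa$ satisfying $\|S-SP_\Lambda\|\leqslant\varepsilon$. Because $|\Lambda|\leqslant\kappa$ and $Z\cong\ell_\infty^c(\kappa)$, there is an isometric embedding $\iota\colon\ell_\infty^c(\Lambda)\to Z$ whose range is complemented in $Z$, and hence in $\ell_\infty^c(\lambda)$; let $\pi\colon\ell_\infty^c(\lambda)\to\iota(\ell_\infty^c(\Lambda))$ be such a projection. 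Putting $\beta:=\iota P_\Lambda$ and $\alpha:=S\iota^{-1}\pi$ (reading $\ell_\infty^c(\Lambda)\hookrightarrow\ell_\infty^c(\lambda)$ by extension by zero), one checks that $\rho\beta=\beta$ (the range of $\beta$ lies in $Z$) and $\alpha\beta=SP_\Lambda$. Hence
\[
SP_\Lambda=\alpha\rho\beta=\alpha\,T\,\tilde U\,\beta
\]
belongs to the ideal generated by $T$. Letting $\varepsilon\to 0$ and invoking closedness of that ideal places $S$ there as well.

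The technically delicate point is the first step: arranging for both $Y$ and $Z$ to be complemented in $\ell_\infty^c(\lambda)$ so that the local inverse of $T|_{Y_0}$ extends globally. This is exactly what Proposition~\ref{stronger} is designed for, and it is applied twice. Once the factorisation $T\tilde U=\rho$ is in hand, the rest of the argument reduces to routine bookkeeping: truncate $S$ via Proposition~\ref{proposition2NEW} and use the self-evident complementation of $\ell_\infty^c(\Lambda)$ inside $\ell_\infty^c(\kappa)$ whenever $|\Lambda|\leqslant\kappa$ to reroute the truncation through $\rho$.
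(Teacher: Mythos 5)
Your argument follows essentially the same route as the paper's proof: use Proposition~\ref{stronger} to manufacture a complemented copy of $\ell_\infty^c(\kappa)$ onto which a projection factors through $T$, then use Proposition~\ref{proposition3} (equivalently, \eqref{moreover}) together with Proposition~\ref{proposition2NEW} to localise $S$ on a coordinate set of cardinality at most $\kappa$ and route the localised operator through that projection. The one point to repair is your final limiting step: as written it places $S$ only in the \emph{closure} of the ideal generated by $T$, whereas the lemma asserts membership in the ideal itself (and this is what the paper proves). The fix costs one line: put $\Lambda=\bigcup_{n\geqslant 1}\Lambda_{1/n}$, which still has cardinality at most $\kappa$; since $P_{\lambda\setminus\Lambda}=P_{\lambda\setminus\Lambda_{1/n}}P_{\lambda\setminus\Lambda}$ for every $n$, we get $\|S-SP_{\Lambda}\|=\|SP_{\lambda\setminus\Lambda}\|\leqslant 1/n$ for all $n$, hence $S=SP_\Lambda$ exactly, and your identity $SP_\Lambda=\alpha T\tilde{U}\beta$ then exhibits $S$ itself in the ideal generated by $T$. (Two cosmetic remarks: only $Z$, not $Y$, needs to be complemented for $\tilde{U}=U\rho$ to make sense; and the embedding $\iota$ of $\ell_\infty^c(\Lambda)$ into $Z$ is merely isomorphic with complemented range, not isometric, since $Z$ is only isomorphic to $\ell_\infty^c(\kappa)$ --- neither affects the argument.)
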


\begin{proof}Note that $\mathscr{B}(\ell_\infty^c(\lambda))$ is nothing but $\mathscr{S}_{\ell_\infty^c(\lambda^+)}(\ell_\infty^c(\lambda))$. (Recall that $\lambda^+$ denotes the immediate cardinal successor of $\lambda$.) It is then enough to consider only the case where $\kappa\leqslant \lambda$. For simplicity of notation, for any cardinal number $\kappa$ set $E_\kappa = \ell_\infty^c(\kappa)$.\smallskip

Fix $T\notin \mathscr{S}_{E_\kappa}(E_\lambda)$ and let $E$ be a subspace of $E_\lambda$ isomorphic to $E_\kappa$ on which $T$ is bounded below and such that $F=T[E]$ is complemented (such $E$ exists by Proposition~\ref{stronger}).\smallskip

Let $S\in \mathscr{S}_{E_{\kappa^+}}(E_\lambda)$. By Proposition~\ref{proposition3}, $S$ is not bounded below on any sublattice isometric to $c_0(\kappa^+)$. By Proposition~\ref{proposition2NEW}, for each $\varepsilon > 0$, there exists a subset $\Lambda_\varepsilon \subset \lambda$ such that $|\Lambda_\varepsilon | \leqslant \kappa$ and $\|SR_{\lambda\setminus \Lambda_{\varepsilon}}\|\leqslant \varepsilon$. Putting $\Lambda = \bigcup_{n=1}^\infty \Lambda_{1/n}$, we have $S=SR_\Lambda$. Thus $S$ factors through $\ell_\infty^c(\kappa)$, and hence through $T|_{E}$. Since $F$ is complemented, $S$ factors through $T$ as well.\end{proof}

\section{Proofs of Theorems~\ref{uniquemax}, \ref{listofideals2}, \ref{maincomplemented} and  \ref{listofideals}}
We are now in a position to prove Theorems~\ref{uniquemax}, \ref{listofideals2}, \ref{maincomplemented} and  \ref{listofideals} (we have already proved Theorem~\ref{complemented} in the previous section). 

\begin{proof}[Proof of Theorem~\ref{uniquemax}]The case of $\lambda=\omega$ follows from \cite[Proposition 2.f.4]{lt} as explained in \cite[p.~253]{lauroy}. Suppose then that $\lambda$ is uncountable. The set $\mathscr{S}_X(X)$ is indeed a closed ideal of $\mathscr{B}(X)$ by Theorem~\ref{trueideals}. To show that it is the unique maximal ideal, take $T\notin \mathscr{S}_X(X)$. Then there are subspaces $E$ and $F$ of $X$, both isomorphic to $X$ such that $T_E\colon E\to F$ is an isomorphism and $F$ is complemented (if $X=\ell_\infty(\lambda)$ then it follows directly from injectivity of $\ell_\infty(\lambda)$; if $X=\ell_\infty^c(\lambda)$ then we apply Theorem~\ref{complemented}).

Let $W=(T|_E)^{-1}$ regarded as operator into $X$. Furthermore, let $U\colon X\to F$ be an isomorphism. Since $F$ is complemented, $U^{-1}$ can be extended to an operator on $X$, say $V$. Consequently,
$$I_X =  VT(WU)$$
belongs to the ideal generated by $T$. For this reason, $I_X = ATB$ for some $A,B\in\mathscr{B}(X)$ if and only if $T\notin \mathscr{S}_X(X)$.\end{proof}

\begin{proof}[Proof of Theorem~\ref{listofideals2}]We keep the notation of the proof of Lemma~\ref{lemmaideal}.

Let $\kappa$ be an infinite cardinal. By Lemma~\ref{lemmaideal}, if $T\notin \mathscr{S}_{E_\kappa}(E_\lambda)$, then $\mathscr{S}_{E_{\kappa^+}}(E_\lambda)$ is contained in the ideal generated by $T$. We will show that if $\mathscr{J}$ is a closed ideal of $\mathscr{B}(E_\lambda)$, then $\mathscr{J}=\mathscr{S}_{E_{\kappa}}(E_\lambda)$ for some cardinal $\kappa$. 

Set $$\tau = \sup\{\rho\colon \rho\text{ is a cardinal number and } \mathscr{S}_{E_\rho}(E_\lambda)\subseteq \mathscr{J}\}.$$ 
This number is well defined as, by Lemma~\ref{lemmaW}, $\mathscr{J}$ contains $\mathscr{W}(E_\lambda) = \mathscr{S}_{E_\omega}(E_\lambda)$. If the supremum $\tau$ is attained and $S\in \mathscr{J}\setminus \mathscr{S}_{E_\tau}(E_\lambda)$, then the ideal generated by $S$, hence $\mathscr{J}$ as well, would contain $\mathscr{S}_{E_\tau^+}(E_\lambda)$, which is impossible by the definition of $\tau$. Consequently, $\mathscr{J}= \mathscr{S}_{E_\tau}(E_\lambda)$. Consider now the case where $\tau>\rho$ for each $\rho$ such that $\mathscr{S}_{E_\rho}(E_\lambda)$ is contained in $\mathscr{J}$. Since $\mathscr{J}$ is closed it contains the closure of $\bigcup_{\rho<\kappa} \mathscr{S}_{E_\rho}(E_\lambda)$, which is to say that $\mathscr{S}_{E_\kappa}(E_\lambda)\subseteq \mathscr{J}$ by Lemma~\ref{lemmadaws}. However, if this inclusion were strict $\mathscr{S}_{E_{\kappa^+}}(E_\lambda)$ would be contained in  $\mathscr{J}$, which would again contradict the definition of $\tau$.\end{proof}

\begin{proof}[Proof of Theorem~\ref{maincomplemented}]Suppose that $X$ is an infinite-dimensional complemented subspace of $\ell_\infty^c(\lambda)$ that is not isomorphic to $\ell_\infty$. Then there is a cardinal number $\kappa \leqslant \lambda$ such that $X$ is isomorphic to a complemented subspace of $\ell_\infty^c(\kappa)$ and $c_0(\kappa)$ embeds into $X$. Consequently, Proposition~\ref{proposition3} applies and so $X$ contains a subspace isomorphic to $\ell_\infty^c(\kappa)$. By Theorem~\ref{complemented}, $X$ contains a complemented subspace isomorphic to $\ell_\infty^c(\kappa)$. Since $\ell_\infty^c(\kappa)$ is isomorphic to $\ell_\infty(\ell_\infty^c(\kappa))$, the $\ell_\infty$-sum of countably many copies of itself, the Pe\l czy\'{n}ski decomposition method (\emph{cf.} \cite[Theorem 2.2.3]{ak}) yields that $X$ is isomorphic to $\ell_\infty^c(\kappa)$.\end{proof}

\begin{proof}[Proof of Theorem~\ref{listofideals}]Let $E_\lambda$ denote one of the spaces $c_0(\lambda)$ or $\ell_p(\lambda)$ for some $p\in [1,\infty)$. For $\lambda=\omega$ the result is well-known (see, \emph{e.g}., \cite[Theorem on p.~82]{pietsch}). Suppose then that $\lambda$ is uncountable.

Since $E_\lambda$ has the approximation property, the ideal of compact operators $\mathscr{K}(E_\lambda)$ is the smallest closed non-zero ideal of $\mathscr{B}(E_\lambda)$ and $\mathscr{K}(E_\lambda) = \mathscr{S}_{E_\omega}(E_\lambda)$ (\cite[Theorem on p.~82]{pietsch}). On the other hand $\mathscr{B}(E_\lambda) = \mathscr{S}_{E_{\lambda^+}}(E_\lambda)$. 

Let $\kappa$ be an infinite cardinal. All we need to do is to show that if $T\notin \mathscr{S}_{\ell_\infty^c(\kappa)}(\ell_\infty^c(\lambda))$, then $\mathscr{S}_{\ell_\infty^c(\kappa^+)}(\ell_\infty^c(\lambda))$ is contained in the ideal generated by $T$. However, the remainder of the proof is completely analogous to the proof of Theorem~\ref{listofideals2}, so we leave the details to the reader. \end{proof}


\begin{thebibliography}{99}
\bibitem{ak} F. Albiac and N. Kalton, \emph{Topics in Banach space theory}. Graduate Texts in Mathematics, 233. Springer, New York, 2006.
\bibitem{argyrosetal} S.~A.~Argyros, J.~F.~Castillo, A.~S.~Granero,
  M.~Jim\'{e}nez, and J.~P.~Moreno, Complementation and embeddings of
  $c_0(I)$ in Banach spaces, \emph{Proc.~London Math.\
    Soc.}~\textbf{85} (2002), 742--768.
\bibitem{benyamini} Y.~Benyamini, An extension theorem for separable Banach space, \emph{Israel J. Math.} \textbf{29} (1978), 24--30.
\bibitem{daws} M. Daws, Closed ideals in the Banach algebra of operators on classical non-separable spaces, \emph{Math. Proc. Cambridge Philos. Soc.} \textbf{140} (2006), no. 2, 317--332.
\bibitem{diestel} J.~Diestel, \emph{Geometry of Banach Spaces}. Selected Topics, Lecture Notes in Math. 485, Springer (1975).
\bibitem{dor} L.~E.~Dor, On projections in $L_1$ \emph{Ann. of Math.} (2) \textbf{102} (1975), 463--474.
\bibitem{djs} D.~Dosev, W.~B.~Johnson and G.~Schechtman, Commutators on $L_p$, $1 \leqslant p < \infty$, \emph{J. Amer. Math. Soc.} \textbf{26} (2013), 101--127.
\bibitem{giesy} D.~P.~Giesy, On a convexity condition in normed linear spaces, \emph{Trans. Amer. Math. Soc.} \textbf{125} (1966), 114--146.
\bibitem{gr} B.~Gramsch, Eine Idealstruktur Banachscher
  Operatoralgebren, \emph{J.~Reine Angew.\ Math.}~\textbf{225} (1967),
  97--115.
\bibitem{granero} A.~S.~Granero, On complemented subspaces of~$c_0(I)$, \emph{Atti Sem.\ Mat.\ Fis.\ Univ.\ M\'{o}dena}
  \textbf{XLVI} (1998), 35--36.
\bibitem{groth} A.~Grothendieck, {Sur les applications lin\'eaires faiblement compactes d'espaces du type $C(K)$}, \emph{Canadian J. Math.}, \textbf{5} (1953), 129--173.
\bibitem{hj} J.~Hagler and W.~B.~Johnson, On Banach spaces whose dual balls are not weak* sequentially compact, \emph{Israel J. Math.} \textbf{28} (1977), 325--330.
\bibitem{james} R.~C.~James, Uniformly non-square Banach spaces, \emph{Ann. of Math.} \textbf{2} (1964) 542--550.
\bibitem{kanlaus3} T.~Kania and N.~J.~Laustsen, Uniqueness of the maximal ideal of the Banach algebra of bounded operators on $C([0,\omega_1])$, \emph{J. Funct. Anal.} \textbf{262} (2012), 4831--4850.
\bibitem{kanlaus2} T.~Kania and N.~J.~Laustsen, Operators on two Banach spaces of continuous functions on locally compact spaces of ordinals, \emph{Proc. Amer. Math. Soc.} \textbf{143} (2015), 2585--2596. 
 \bibitem{kanlaus} T.~Kania and N.~J.~Laustsen, Uniqueness of the maximal ideal of operators on the $\ell_p$-sum of $\ell_\infty^n\, (n\in\mathbb{N})$ for $1<p<\infty$, to appear in \emph{Math. Proc. Cambridge Phil. Soc.}, preprint, \texttt{arXiv:1405.5715}.
\bibitem{Ko} P.~Koszmider, {On decompositions of Banach spaces of continuous functions on Mr\'{o}wka's spaces}, \emph{Proc. Amer. Math. Soc.}~\textbf{133} (2005), 2137--2146.
\bibitem{lauroy}  N. J. Laustsen and R. J. Loy, Closed ideals in the Banach algebra of operators on a Banach space,
\emph{Proceedings of the Conference on Topological Algebras, their Applications, and Related Topics} (ed.
K. Jarosz and A. So\l tysiak), Banach Center Publications 67 (2005), 245--264.
\bibitem{laschza} N.~J.~Laustsen, Th.~Schlumprecht and A.~Zs\'{a}k, The lattice of closed ideals in the Banach algebra of operators on a certain dual Banach space, \emph{Journal of Operator Theory} \textbf{56} (2006) 391--402.
\bibitem{lin} J.~Lindenstrauss,  On Complemented Subspaces of $m$, \emph{Israel J. Math.} \textbf{5} (1967), 153--156.
\bibitem{lt} J.~Lindenstrauss and L.~Tzafriri, \emph{Classical Banach
    spaces,} Vol.~I, Ergeb.\ Math.\ Grenz\-geb.~{92}, Springer-Verlag, 1977.
\bibitem{luft} E.~Luft, The two-sided closed ideals of the algebra of
  bounded linear operators of a Hilbert space, \emph{Czechoslovak
    Math.~J.}~\textbf{18} (1968), 595--605.
\bibitem{pelczynski} A. Pe\l czy\'{n}ski, {Banach spaces on which every unconditionally converging operator is weakly compact}, \emph{Bull. Acad. Pol. Sci. Ser. Math. Astr. Phys}.~\textbf{10} (1962), 265--270.
\bibitem{pelsu} A. Pe\l czy\'{n}ski and V.~N.~Sudakov, Remark on non-complemented subspaces of the space $m(S)$ , \emph{Colloq. Math.} \textbf{19} (1962), 85--88.
\bibitem {pietsch} A.~Pietsch, \emph{Operator Ideals}, North--Holland, Amsterdam 1980.
\bibitem{ros1} H.~P.~Rosenthal, On relatively disjoint families of measures, with some applications to Banach space theory, \emph{Studia Math}.\textbf{37} (1970), 13--36.
\bibitem{ros1c} H.~P.~Rosenthal, Correction to the paper: ``On relatively disjoint families of measures, with some applications to Banach space theory''.
\emph{Studia Math.} \textbf{37} (1970/71), 311--313.
\bibitem{ros2} H.~P.~Rosenthal, On injective Banach spaces and the spaces $C(S)$, \textit{Bull. Amer. Math. Soc.} \textbf{75} (1969), 824--828.
\bibitem{ros3} H.~P.~Rosenthal, On injective banach spaces and the spaces $L^\infty(\mu)$ for finite measures $\mu$, \emph{Acta Math.}, \textbf{124} (1970), 205--248.
\bibitem{wojtaszczyk} P. Wojtaszczyk, On projections and unconditional bases in direct sums of Banach spaces II, \emph{Stud. Math.} \textbf{62} (1978), 193--201.

\end{thebibliography}
\end{document}